\newtoks\prt
\numberwithin{equation}{section}
\newtheorem{thm}{Theorem}[section]
\newtheorem{lemma}[thm]{Lemma}
\newtheorem{prop}[thm]{Proposition}
\newtheorem{cor}[thm]{Corollary}
\newtheorem*{claim}{Claim}
\theoremstyle{definition}
\newtheorem{definition}[thm]{Definition}
\def\eqn#1$$#2$${\begin{equation}\label#1#2\end{equation}}
\def\C{\mathcal C}
\def\E{\mathcal E}
\def\e{e^\ast}
\def\E{E^\ast}
\def\ep{\varepsilon}
\def\en{\mathbb N}
\def\er{\mathbb R}
\def \reg {\partial _{\kern1pt\text{reg}}}
\def\la{\langle}
\def\ra{\rangle}
\newcommand{\norm}[1]{\left\|#1\right\|}
\newcommand{\abs}[1]{\left| #1  \right|}
\begin{document}

\title[Isomorphisms of $\C(K)$ and cardinal invariants of derivatives of $K$]
{On isomorphisms of $\C(K)$ spaces and cardinal invariants of derivatives of $K$}

\author[J.~Rondo\v s]{Jakub Rondo\v s}
\address[J.~Rondo\v s]{Charles University\\
Faculty of Mathematics and Physics\\
Department of Mathematical Analysis \\
Sokolovsk\'{a} 83, 186 \ 75\\Praha 8, Czech Republic}

\email{jakub.rondos@gmail.com}

\subjclass[2020]{46E15, 46B03, 47B38, 54D30.}

\keywords{$\C(K)$ space, isomorphism, Cantor-Bendixon derivative, cardinal invariant}

\thanks{}

\begin{abstract}
We present a necessary condition for a pair of $\C(K)$ spaces to be isomorphic in terms of topological properties of Cantor-Bendixon derivatives of $K$. This in particular gives a completely new information about the perfect kernels of such $K$. In the process, we extend known lower estimates of the Banach-Mazur distance between a pair of spaces of continuous functions from the case of scattered compact spaces to a more general setting. Next, we apply this general result to deduce some new information about isomorphisms of spaces of continuous functions over Eberlein compacta of height $\omega+1$. Further, we show that isomorphisms of $\C(K)$ spaces preserve the spread of $K$, and we also prove some new results for pairs of spaces of continuous functions whose Banach-Mazur distance is less than $3$.
\end{abstract}

\maketitle

\section{Introduction}

This work is devoted to the isomorphic theory of Banach spaces of continuous functions on compact Hausdorff spaces. More specifically, we are interested in finding out what properties of general compact spaces are preserved by isomorphisms of their spaces of continuous functions. Of course, some properties of compact spaces are maintained simply due to the fact that they can be equivalently reformulated as Banach space properties of the corresponding spaces of continuous functions. These include the properties of being scattered, being Eberlein, metrizability, and the value of weight, that correspond in terms of the $\C(K)$ space respectively to the properties of being Asplund, being weakly compactly generated, separability, and the value of density. Further, the cellularity of $K$ is preserved, since it can be expressed in terms of presence of copies of $c_0(\Gamma)$ in $\C(K)$, see \cite[page 230]{Rosenthal_L^infty}. An another proof of the preservation of cellularity follows as a special case of Theorem \ref{main} below. Also, the property of carrying a strictly positive measure is preserved, since $K$ carries a strictly positive measure if and only if there exists a weakly compact set in $\C(K)^{\ast}$ whose linear span is weak$^*$ dense, see \cite[Theorem 1.4]{Rosenthal_injective_C(S)}. The class of weakly Koszmider spaces is preserved, since this property of $K$ can be expressed using only commutators of operators on $\C(K)$, see \cite[Theorem 4.6]{SCHLACKOW_weakly_Koszmider}. There are more classes of compact spaces that are preserved due to the fact that they can be characterized by means of properties of $\C(K)$, see e.g. \cite[page 1099]{NEGREPONTIS_Banach_spaces_and_topology}.
Then there are properties that are preserved, even though they do not coincide with any Banach space property of the $\C(K)$ space, like the value of cardinality \cite{cengiz}.

On the other hand, there are known natural properties that are not preserved, that include the values of density (which can be derived from the fact that $l_{\infty}$ is isomorphic to $L_{\infty}([0, 1])$ by a theorem of Pelczynski), character (the countable power $K$ of the Alexandroff duplicate of the Cantor set is a first countable space whose $\C(K)$ space is isomorphic to a space of continuous functions over a compact space with uncountable character, since it is isomorphic to $\C(K) \oplus c_0(\omega_1)$, see \cite[comments before theorem 4.15]{Marciszewski_eberlein_problems} for more information about this compact space), the property of being Frechet-Urysohn \cite{Okunev_frechet}, or the topological dimension, which follows for example from the well-known Miljutin theorem \cite[Theorem 2.1]{RosenthalC(K)} which states that spaces of continuous functions over uncountable compact metric spaces are all isomorphic.
An interesting recent result in this direction is the one from \cite{Glodkowski_C(K)_dimension}. Here the author shows that it is consistent that there exists a $\C(K)$ space for which $L$ has the same dimension as $K$ whenever $\C(L)$ is isomorphic to $\C(K)$. It might be interesting to investigate this phenomenon also for other properties that are not preserved in general.
Further, it is a long-standing open problem whether the class of Corson compact spaces is preserved (e.g. \cite[Question 1]{KOSZMIDER_interplay_K_C(K)}). The answer is affirmative if the spaces are also homogenous \cite[Corollary 6.3]{Plebanek2015}, or under Martin’s axiom and the negation of continuum hypothesis \cite{Argyros_Mer_Neg_1988}. Of course, there are plenty of cardinal invariants and topological properties that one might consider, and for which it seems to be unknown whether they are preserved.

The main goal of this concrete paper is to prove that if two $\C(K)$ spaces are isomorphic then for a certain cardinal invariant, there is some kind of an asymptotic equality on the Cantor-Bendixon derivatives of the associated compact spaces, at each nonzero gamma number (we recall that a gamma number is either $0$ or a power of $\omega$). This in particular implies that the cardinal invariant must be the same on the perfect kernels. Of course, it is not in general possible to deduce some similarities between the Cantor-Bendixon derivatives of a given order $\alpha$ (the exception being $\alpha=0$), since it can easily happen that such a derived set is finite in one of the spaces, and it is infinite in the other space.

Despite the fact that all the presented results are new for the case or real-valued functions, we will prove them in a more general setting of Banach-space valued functions. Even though the structure of the spaces $\C(K, E)$ (and of isomorphisms of these spaces), where $E$ is a Banach space, is in general much more complicated than the structure of $\C(K)$, it has been found out in the last years that isomorphisms of $\C(K, E)$ spaces, where $E$ belongs to some suitable class of Banach spaces, behave very similarly as isomorphisms of $\C(K)$ spaces (in the sense of preserving the properties of $K$). There have been results in this direction proved, for example, for uniformly convex spaces (\cite{cambern-pacific}, \cite{Galego_classification}, \cite{GALEGO_Zahn_classification}) and spaces with nontrivial cotype (\cite{CandidoGalegoCotype}, \cite{candidogalego}, \cite{Galego_Zahn_dichotomy}). Later on, it has been found out that many of the result work also in a more general case of Banach spaces that do not contain an isomorphic copy of $c_0$ (\cite{CANDIDOc0}, \cite{GalegoVillamizar}, \cite{CandidoScattered}, \cite{rondos-somaglia}). 
Since for each pair of compact spaces $K_1, K_2$, we have the canonical isometries $\C(K_1, \C(K_2)) \simeq \C(K_1 \times K_2) \simeq \C(K_2, \C(K_1))$, it is clear that the assumptions on the Banach spaces $E$ cannot be plainly removed in these results.

To be able to state our results we first need to introduce several notions. To this end, we recall that a family $\mathcal{U}$ of nonempty subsets of a topological space $X$ has order $k \in \en$ if for every $x \in X$, there are at most $k$ sets from $\mathcal{U}$ that contain $x$. Further, we say that $\mathcal{U}$ is point-bounded if it has order $k$ for some $k \in \en$, and we say that $\mathcal{U}$ is point-finite if, for every $x \in X$, there are at most finitely many sets from $\mathcal{U}$ that contain $x$. The family $\mathcal{U}$ is $\sigma$-point-bounded if it is a union of countably many point-bounded families. Further, we say that the family $\mathcal{U}$ is \emph{meeting} a subset $A$ of $X$, if $A \cap U \neq \emptyset$ for each $U \in \mathcal{U}$, and we say that the family $\mathcal{U}$ is \emph{cellular} if sets from $\mathcal{U}$ are open and pairwise disjoint.

\begin{definition}
Let $X$ be a topological space and $A \subseteq X$. We define the \emph{relative cellularity} of $A$ in $X$ as
\[c(A, X)=\sup \{ \abs{\mathcal{U}}: \mathcal{U} \text{ is a cellular family in } X \text{ which is meeting } A\}.\]
Further, we consider the cardinal numbers
\begin{equation}
\nonumber
\begin{aligned}
c_{\text{pb}}(A, X)=&\sup \{ \abs{\mathcal{U}}: \mathcal{U} \text{ is a point-bounded} \text{ family of nonempty open subsets of } \\&X \text{ which is meeting } A\},
\end{aligned}
\end{equation}
\begin{equation}
\nonumber
\begin{aligned}
c_{\text{pf}}(A, X)=&\sup \{ \abs{\mathcal{U}}: \mathcal{U} \text{ is a point-finite} \text{ family of nonempty open subsets of } \\&X \text{ which is meeting } A\}.
\end{aligned}
\end{equation}
\end{definition}

We further denote $c(A)=c(A, A)$, if $A$ is considered as topological space with the topology inherited from $X$, and similarly for $c_{\text{pb}}(A)$ and $c_{\text{pf}}(A)$. Thus $c(A)$ is the classical cellularity of $A$, defined by
\[c(A)=\sup \{ \abs{\mathcal{U}}: \mathcal{U} \text{ is a cellular family in } A \},\]
Let us also recall the notion of spread:
\[s(A)=\sup \{ \abs{D}: D \subseteq A \text{ and } D \text{ is discrete in itself} \}.\]
It might be worth to note that these numbers are related by the formula
\[s(A)=\sup \{c(B): B \subset A\},\]
see e.g. \cite[page 37]{Arkhangelskii_cardinal_invariants}.
More information about the above-defined cardinal invariants will be given in the next section. 

Our main result is the following one. It provides a generalization of the known lower estimates of the Banach-Mazur distance between a couple of spaces of continuous functions, that have been subsequently proved in the last years by several authors, see e.g. the papers \cite{CandidoGalegoComega}, \cite{CANDIDOc0}, \cite{rondos-scattered-subspaces} and \cite{rondos-somaglia}. Even though the constant estimating the Banach-Mazur distance remains the same, there is still a significant improvement. It lies in the fact that so far, the estimates only worked for scattered spaces and were based on their heights, while in the following result, the estimate works for spaces that need not be scattered and is based on topological properties of their derived sets.  

\begin{thm}
\label{core}
Let $K_1, K_2$ be nonempty compact spaces, $E$ be a Banach space and suppose that $T:\C(K_1) \rightarrow \C(K_2, E)$ is an isomorphic embedding. Let $\alpha$ be an ordinal and $n, k \in \en$, $n \geq k$ be such that the set $K_2^{(\omega^{\alpha}k)}$ is infinite.
Moreover, let at least one of the following conditions hold. 
\begin{itemize}
    \item[(i)] $E$ has nontrivial cotype and $c_{\text{pb}}(K_1^{(\omega^{\alpha}n)}, K_1)>c_{\text{pb}}(K_2^{(\omega^{\alpha}k)}, K_2)$,
    \item[(ii)] $E$ does not contain $c_0$ isomorphically and $c_{\text{pb}}(K_1^{(\omega^{\alpha}n)}, K_1)>c_{\text{pf}}(K_2^{(\omega^{\alpha}k)}, K_2)$.
\end{itemize}
	Then $\norm{T}\norm{T^{-1}}\geq \max \{3, \frac{2n+2-k}{k}\}$.
\end{thm}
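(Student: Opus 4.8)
The plan is to reduce the statement to a purely combinatorial/geometric fact about how an isomorphic embedding $T\colon\C(K_1)\to\C(K_2,E)$ must ``spread out'' the evaluation functionals, and then feed that into the weak$^*$-topology machinery already developed in earlier sections (the constructions behind \cite{rondos-scattered-subspaces} and \cite{rondos-somaglia}). Concretely, I would fix a cellular (or point-bounded) family $\mathcal U=\{U_\lambda\}_{\lambda<\kappa}$ of open subsets of $K_1$ with order $m$, meeting $K_1^{(\omega^\alpha n)}$, where $\kappa$ is (close to) $c_{\text{pb}}(K_1^{(\omega^\alpha n)},K_1)$, and pick points $x_\lambda\in U_\lambda\cap K_1^{(\omega^\alpha n)}$. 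The heart of the argument is to exploit the derivative order: a point of $K_1^{(\omega^\alpha n)}$ can be approached, inside $U_\lambda$, by a transfinite ``tree'' of points of lower Cantor--Bendixson rank, and peeling off $n$ blocks of length $\omega^\alpha$ gives us $n$ nested levels of perturbation room. This is exactly the mechanism that in the scattered case produced the factor $\frac{2n+2-k}{k}$ from height alone; here it must be localized to a single derived set rather than read off the global height.

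The key steps, in order, are as follows. First, establish a ``localization lemma'': if $K_2^{(\omega^\alpha k)}$ is infinite, then inside $\C(K_2,E)$ one can find, for any prescribed finite target, norm-one functions supported near $K_2^{(\omega^\alpha k)}$ whose behaviour under $T^*$ mimics the behaviour of $c_0$- or $\ell_\infty^m$-type bases depending on whether $E$ has nontrivial cotype or merely omits $c_0$ — this is where hypotheses (i) and (ii) bite, and it should follow by adapting the vector-valued arguments of \cite{CANDIDOc0} and \cite{rondos-somaglia} to the relative setting. Second, run the standard ``too many almost-disjointly-supported functions'' counting argument: if $\|T\|\|T^{-1}\|<\frac{2n+2-k}{k}$, then the images $T e_{x_\lambda}^*$ (or rather the functionals $T^* \delta_{y}$ for $y$ ranging over a suitable discrete subset of $K_2^{(\omega^\alpha k)}$) would be forced into a point-bounded family of order strictly smaller than forced by the hypothesis, contradicting $c_{\text{pb}}(K_1^{(\omega^\alpha n)},K_1)>c_{\text{pb}}(K_2^{(\omega^\alpha k)},K_2)$ (resp.\ $>c_{\text{pf}}$). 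Third, separately verify the ``$\geq 3$'' part: this is the degenerate $k=n$ (or small-$n$) case and follows from the weaker, already-known estimate that an isomorphism of $\C(K)$-type spaces in which one side has an infinite scattered-like derived set forces distance $\geq 3$ — so one takes the max.

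I would organize the write-up so that the transfinite induction on $\alpha$ is isolated: prove the case $\alpha=0$ (where $K_2^{(\omega^\alpha k)}=K_2^{(k)}$ and everything is at the level of finite Cantor--Bendixson derivatives, essentially the scattered-style argument made relative) and then lift to general $\alpha$ by the usual trick of replacing ``points of rank $\geq k$'' with ``points of rank $\geq \omega^\alpha k$'' and noting that the derivative operator $A\mapsto A'$ iterated $\omega^\alpha$ times still admits the fan-of-sequences structure needed to build the perturbing functions; the ordinal arithmetic identity $\omega^\alpha n=\omega^\alpha k+\omega^\alpha(n-k)$ is what makes the block-peeling legitimate.

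The main obstacle, I expect, is the localization lemma — getting the $c_0$/$\ell_\infty^m$ gadgets to live \emph{near a prescribed derived set} $K_2^{(\omega^\alpha k)}$ rather than globally in $K_2$. In the scattered case one has a top derivative that is finite and isolated, which gives clean partitions of unity; in the general (non-scattered) case $K_2^{(\omega^\alpha k)}$ may be large and badly embedded, its complement need not be relatively open in any convenient way, and one must instead work with the \emph{relative} cellularity/point-finiteness and be careful that the open sets witnessing it can be shrunk to have closures controlled by $T$ and $T^{-1}$. I would handle this by passing to a clopen-in-$K_2^{(\omega^\alpha k)}$ refinement, extending to a point-finite open family in $K_2$ using normality, and then using the infinitude of $K_2^{(\omega^\alpha k)}$ to extract a countable ``independent'' subfamily on which the vector-valued perturbation estimates of the cited papers apply verbatim. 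Everything downstream is then the by-now-routine Banach--Mazur counting, yielding the stated $\max\{3,\frac{2n+2-k}{k}\}$.
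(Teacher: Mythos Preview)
Your plan has a structural misorientation: the work should take place on the $K_1$ side, not the $K_2$ side. The argument never builds functions in $\C(K_2,E)$ and never needs a ``localization lemma'' producing gadgets supported near $K_2^{(\omega^\alpha k)}$ --- the obstacle you flag as the main one is a phantom. The cardinality hypothesis is used only in the forward direction, via Lemma~\ref{system}: starting from a point-bounded system of bumps $\mathcal P$ in $\C(K_1)$ meeting $K_1^{(\omega^\alpha n)}$ with $\abs{\mathcal P}>c_{\text{pb}}(K_2^{(\omega^\alpha k)},K_2)$, one obtains $\abs{\mathcal P}$-many $f\in\mathcal P$ with $\norm{Tf|_{K_2^{(\omega^\alpha k)}}}<\ep$. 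There is no induction on $\alpha$; the ordinal $\alpha$ is fixed throughout and enters only through Proposition~\ref{pom}.

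The actual induction is a \emph{finite} one, on $i=1,\ldots,n+1$, building a system (still of cardinality $>c_{\text{pb}}(K_2^{(\omega^\alpha k)},K_2)$, so Lemma~\ref{system} can be reapplied at each step) of nested tuples $g_1\geq\cdots\geq g_i$ of bumps in $\C(K_1)$ such that the interior of $\{g_j=1\}$ meets $K_1^{(\omega^\alpha(n-j+1))}$ and, crucially, for every $j\leq i$ the set
\[
K_2^{(\omega^\alpha\max\{k-j+1,0\})}\cap\bigcup_{A\in[\{1,\ldots,i\}]^j}\bigcap_{p\in A}\{\norm{Tg_p}\geq\ep\}
\]
is empty. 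The step $i\to i+1$ uses Proposition~\ref{pom}: the compact set $N$ on which the constraints could fail at the next level has height $\leq\omega^\alpha$, while the interior of $\{g_i=1\}$ still meets $K_1^{(\omega^\alpha(n-i))}$ and hence has height $>\omega^\alpha$ there, so one can insert $g_{i+1}\leq g_i$ with $\norm{Tg_{i+1}|_N}<\ep$. At $i=n+1$ the emptiness condition for $j=k+1$ says precisely that the family $\{\norm{Tg_p}\geq\ep\}_{p=1}^{n+1}$ has order $k$, and Lemma~\ref{norm} then delivers $\norm{T}\norm{T^{-1}}\geq\frac{2n+2-k}{k}$; the bound $3$ is handled separately by Theorem~\ref{dist3}. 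Your dual route via $T^*\delta_y$ and ``too many almost-disjointly-supported functions'' does not produce the nested bump structure that Lemma~\ref{norm} requires, and the assertion that small distortion would force a point-bounded family ``of order strictly smaller than forced by the hypothesis'' has no mechanism behind it --- it is the chain of emptiness conditions above, not an order bound inherited from the cardinality gap, that closes the argument.
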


From this theorem we obtain the following result which concerns the asymptotic equality of the above-defined cardinal numbers of the derivatives at each nonzero gamma number. In particular, we obtain completely new information about perfect kernels of the respective compact spaces $K$, denoted as $K^{(\infty)}$. 

\begin{thm}
\label{main}
Let $K_1, K_2$ be infinite compact spaces and $E$ be a Banach space such that $\C(K_1)$ is isomorphic to a subspace of $\C(K_2, E)$. Then for each ordinal $\alpha$:
\begin{itemize}
    \item[(i)] If $E$ has nontrivial cotype, then 
    \[ \min_{\beta<\omega^{\alpha}} c_{\text{pb}}(K_1^{(\beta)}, K_1) \leq \min_{\beta<\omega^{\alpha}} c_{\text{pb}}(K_2^{(\beta)}, K_2) .\]
    In particular, $c(K_1) \leq c(K_2)$ and $c_{\text{pb}}(K_1^{(\infty)}, K_1) \leq c_{\text{pb}}(K_2^{(\infty)}, K_2)$.
    \item[(ii)] If $E$ does not contain an isomorphic copy of $c_0$, then 
    \[ \min_{\beta<\omega^{\alpha}} c_{\text{pb}}(K_1^{(\beta)}, K_1) \leq \min_{\beta<\omega^{\alpha}} c_{\text{pf}}(K_2^{(\beta)}, K_2) .\]
    In particular, $c(K_1) \leq c(K_2)$ and $c_{\text{pb}}(K_1^{(\infty)}, K_1) \leq c_{\text{pf}}(K_2^{(\infty)}, K_2)$.
\end{itemize}
\end{thm}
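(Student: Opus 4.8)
Here is how I would go about it, deducing Theorem~\ref{main} from Theorem~\ref{core} by contradiction and letting the parameter $n$ in Theorem~\ref{core} go to infinity.

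Fix an isomorphic embedding $T\colon\C(K_1)\to\C(K_2,E)$, put $M=\norm{T}\norm{T^{-1}}<\infty$, and fix $\alpha$. Three elementary remarks will be used throughout: if $E$ has nontrivial cotype then $E$ contains no copy of $c_0$ (cotype passes to subspaces, $c_0$ has none), so in both cases $E$ is $c_0$-free; for an infinite compact space $K$ and $A\subseteq K$ one has $c_{\text{pb}}(A,K)\geq\aleph_0$ precisely when $A\neq\emptyset$ (a point of $A$ lies in arbitrarily large finite families of open sets — complements of finite sets avoiding it — while $c_{\text{pb}}(\emptyset,K)=0$), and likewise for $c_{\text{pf}}$; and the maps $\beta\mapsto c_{\text{pb}}(K_i^{(\beta)},K_i)$, $\beta\mapsto c_{\text{pf}}(K_i^{(\beta)},K_i)$ are non-increasing, so over $\beta<\omega^\alpha$ each of the four minima is attained, in fact on a final segment of $[0,\omega^\alpha)$. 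Now assume the conclusion fails: $\mu_1:=\min_{\beta<\omega^\alpha}c_{\text{pb}}(K_1^{(\beta)},K_1)>\nu_2$, where $\nu_2$ is the right-hand minimum ($c_{\text{pb}}$ in case (i), $c_{\text{pf}}$ in case (ii)). Then $\mu_1\geq\aleph_0$, so $K_1^{(\beta)}\neq\emptyset$ for all $\beta<\omega^\alpha$, and when $\alpha\geq1$ each such $K_1^{(\beta)}$ is infinite (a finite nonempty derived set has an empty successor derivative, still indexed below the limit ordinal $\omega^\alpha$); note also that when $\alpha=0$ one automatically has $\nu_2\geq\aleph_0$, as $K_2^{(0)}=K_2$ is infinite.

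First I would dispose of the degenerate case $\nu_2=0$ (which forces $\alpha\geq1$): this means exactly that $K_2$ is scattered of Cantor--Bendixon height $<\omega^\alpha$, so $K_2^{(\omega^{\gamma}k)}=\emptyset$ for suitable $\gamma<\alpha$ and $k\in\en$. Since $E$ is $c_0$-free it contains no copy of $\C([0,1])$; as $K_2$ has no perfect subset, $\C(K_2,E)$ then contains no copy of $\C([0,1])$ by the known structure of $\C(K,E)$-spaces, whence $\C(K_1)$ contains no copy of $\C([0,1])$ and $K_1$ is scattered by Pe\l czy\'nski's theorem. With $K_1,K_2$ both scattered, the previously known scattered-case lower estimates of \cite{CandidoGalegoComega}, \cite{CANDIDOc0}, \cite{rondos-scattered-subspaces}, \cite{rondos-somaglia} (which Theorem~\ref{core} generalises), applied with this $\gamma,k$ and with $n\to\infty$, force $K_1^{(\beta)}=\emptyset$ for some $\beta<\omega^\alpha$, i.e. $\mu_1=0$ — a contradiction. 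So from now on $\nu_2\geq\aleph_0$, and hence (for $\alpha\geq1$) every $K_2^{(\beta)}$, $\beta<\omega^\alpha$, is infinite.

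Suppose next $\alpha\geq1$ and $\nu_2\geq\aleph_0$. Since the ordinals $\omega^{\gamma}m$ with $\omega^{\gamma}m<\omega^\alpha$ are cofinal in $[0,\omega^\alpha)$ (read off Cantor normal forms), I can choose $\gamma<\alpha$ and $M_0\in\en$ with $\omega^{\gamma}m<\omega^\alpha$ for all $m$ and with $c_{\text{pb}}(K_1^{(\omega^{\gamma}m)},K_1)=\mu_1$ and the relevant $K_2$-quantity of $K_2^{(\omega^{\gamma}m)}$ equal to $\nu_2$ for every $m\geq M_0$. For each $m\geq M_0$, apply Theorem~\ref{core} to $T$ with the ordinal $\gamma$, with $n:=m$ and $k:=M_0$, and with the clause matching the case: $K_2^{(\omega^{\gamma}M_0)}$ is infinite, and $c_{\text{pb}}(K_1^{(\omega^{\gamma}m)},K_1)=\mu_1>\nu_2$ equals the $K_2$-quantity at level $\omega^{\gamma}M_0$, so the hypotheses hold. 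This yields $M\geq(2m+2-M_0)/M_0$ for all $m\geq M_0$, which is absurd.

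It remains to treat $\alpha=0$, i.e. to prove $c_{\text{pb}}(K_1)\leq c_{\text{pb}}(K_2)$ (case (i)) and $c_{\text{pb}}(K_1)\leq c_{\text{pf}}(K_2)$ (case (ii)); here Theorem~\ref{core} does not apply directly, since its least relevant derived set is $K^{(1)}$ rather than $K^{(0)}$, so I pass to products. For $m\in\en$, post-composing $T$ with the identifications $\C(L\times K')\cong\C(L,\C(K'))$ gives an isomorphic embedding $\C(K_1\times(\omega^{m}+1))\to\C(K_2\times(\omega^{m}+1),E)$ with the same $M$. Using $(K_i\times(\omega^{m}+1))^{(m)}\supseteq K_i\times(\omega^{m}+1)^{(m)}=K_i\times\{\omega^{m}\}$ one gets $c_{\text{pb}}\bigl((K_1\times(\omega^{m}+1))^{(m)},K_1\times(\omega^{m}+1)\bigr)\geq c_{\text{pb}}(K_1)$, while $(K_2\times(\omega^{m}+1))^{(1)}\supseteq K_2\times\{\omega^{m}\}$ is infinite and, $\omega^{m}+1$ being countable, its $c_{\text{pb}}$ (resp. $c_{\text{pf}}$) in $K_2\times(\omega^{m}+1)$ is at most $c_{\text{pb}}(K_2)$ (resp. $c_{\text{pf}}(K_2)$). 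Applying Theorem~\ref{core} with ordinal $0$, $n:=m$, $k:=1$, and the appropriate clause gives $M\geq 2m+1$ for all $m$ — absurd. The ``in particular'' assertions then follow from the case $\alpha=0$ together with the coincidence of $c$, $c_{\text{pb}}$, $c_{\text{pf}}$ on compact spaces (recorded in the next section), and from taking $\alpha$ with $\omega^\alpha$ above the Cantor--Bendixon heights of $K_1$ and $K_2$, so that the minima become the values at $K_1^{(\infty)}$ and $K_2^{(\infty)}$. The step I expect to be the genuine obstacle is the case $\nu_2=0$: Theorem~\ref{core} is powerless there (it requires $K_2^{(\omega^\alpha k)}$ to be infinite), so one must fall back on Pe\l czy\'nski's theorem, on the behaviour of $\C(K,E)$ for scattered $K$ and $c_0$-free $E$, and on the older scattered-case estimates; everything else reduces to ordinal-arithmetic bookkeeping with the cardinal functions $c_{\text{pb}},c_{\text{pf}}$ and to the two routine product computations.
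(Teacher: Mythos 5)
Your proposal is correct and, at its core, runs on the same engine as the paper: assume the inequality fails, feed Theorem~\ref{core} with a fixed level $\omega^{\gamma}k$ on the $K_2$ side and $n\to\infty$ on the $K_1$ side, and contradict the boundedness of $\norm{T}\norm{T^{-1}}$; your stabilization argument for choosing $\gamma$ and $M_0$, and your observation that $\nu_2\geq\aleph_0$ forces every $K_2^{(\beta)}$, $\beta<\omega^{\alpha}$, to be infinite (so the hypothesis of Theorem~\ref{core} holds), is just a uniform version of what the paper does separately for successor and limit $\alpha$. The differences are at the two ends. For $\alpha=0$ the paper needs no product trick: it takes a cellular system of bumps in $\C(K_1)$ of cardinality greater than $c(K_2)$ and applies Lemma~\ref{system} directly to get $\norm{Tf}<\ep$ for some norm-one $f$; your passage to $K_i\times[0,\omega^{m}]$ also works, but the two estimates you call routine, namely $c_{\text{pb}}\bigl((K_2\times[0,\omega^{m}])^{(1)},K_2\times[0,\omega^{m}]\bigr)\leq c_{\text{pb}}(K_2)$ and its $c_{\text{pf}}$ analogue (shrink to boxes and split over the countably many second coordinates), should be written out, and the induced embedding $\C(K_1\times[0,\omega^{m}])\to\C(K_2\times[0,\omega^{m}],E)$ with the same distortion deserves a line. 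Conversely, your isolation of the degenerate case $\nu_2=0$ is a genuine merit: there Theorem~\ref{core} is indeed unusable, since its hypothesis that $K_2^{(\omega^{\gamma}k)}$ be infinite can fail for every admissible $k$, a point the paper's own argument passes over when it chooses $k$ only so that the relative invariant is small. Your resolution is acceptable, but the assertion that $\C(K_2,E)$ contains no copy of $\C([0,1])$ when $K_2$ is scattered and $E$ is $c_0$-free is not an off-the-shelf fact; it is essentially the Szlenk-index theorem of \cite{rondos-somaglia} applied to the embedding of $\C([0,1])$, so cite it as such. Alternatively, you can bypass Pe\l czy\'nski's theorem and the older scattered-case estimates entirely by applying Proposition~\ref{pom} with $L_1=K_1$, $U=K_1$, $L_2=K_2$: if all $K_1^{(\beta)}$, $\beta<\omega^{\alpha}$, are nonempty while $ht(K_2)<\omega^{\alpha}$, then $\Gamma(ht(K_1))>\omega^{\alpha}\geq\Gamma(ht(K_2))$ (heights of scattered compacta are successor ordinals), and Proposition~\ref{pom} yields norm-one functions $f$ with $\norm{Tf}$ arbitrarily small, contradicting that $T$ is an isomorphic embedding.
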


To make the statement of the theorem more understandable, we note that for each compact space $L$, $c(L)=c_{\text{pb}}(L)=c_{\text{pf}}(L)$, which will be shown in Lemma \ref{rovnost_c=c_pf}(iii) below. Further, the reason why the minima in the above result exist is due to the fact that for each compact space $K$ and ordinal $\alpha$, the nets of cardinal numbers $\{c_{\text{pb}}(K^{(\beta)}, K)\}_{\beta < \omega^{\alpha}}$ and $\{c_{\text{pf}}(K^{(\beta)}, K)\}_{\beta < \omega^{\alpha}}$ are nonincreasing, hence eventually constant. For nonzero ordinals $\alpha$, the above minima might be equivalently expressed in terms of limits, where $\beta \rightarrow \omega^{\alpha}$.

Further, we note that while items (ii) in the above results might seem a bit unnatural, since one compares two different cardinal invariants, we decided to include them, as it might indicate an interesting interaction of the geometry of the Banach space $E$ with the underlying compact spaces. Moreover, in some cases, the numbers $c_{\text{pb}}$ and $c_{\text{pf}}$ coincide, as we show below. We do not know whether the conclusion of the items (i) is true in the case when $E$ does not contain a copy of $c_0$. 

Next we note that Theorem \ref{main} serves as a common generalization of several known facts. Firstly, we get a new proof of the preservation of cellularity of $K$ by isomoprhisms of $\C(K, E)$ spaces, where $E$ does not contain an isomorphic copy of $c_0$. Even though we have not found any exact reference for this, it can be derived from two published results, namely, from \cite[page 230]{Rosenthal_L^infty} and \cite[Theorem 3.1]{Galego-Hagler_kopie_c0}, that concern respectively the relation of cellularity of $K$ and the presence of copies of $c_0(\Gamma)$ in $\C(K)$, and the question of presence of copies of $c_0(\Gamma)$ in $\C(K)$ and $\C(K, E)$ when $E$ does not contain $c_0$. Secondly, the result generalizes the fact that if $\C(K_1)$ is isomorphic to a subspace of $\C(K_2, E)$ and $E$ does not contain $c_0$, then the Szlenk index of $\C(K_1)$ is less than or equal to the Szlenk index of $\C(K_2)$. This for $E=\er$ follows simply from the fact the Szlenk index of a Banach space cannot be increased by isomorphic embeddings, and the case for a general Banach which does not contain $c_0$ has been recently proved in \cite{rondos-somaglia}. 
Indeed, the statement that the Szlenk index of $\C(K_1)$ is less than the Szlenk index of $\C(K_2)$ might be equivalently reformulated by saying that for each ordinal $\alpha$, if $\min_{\beta<\omega^{\alpha}} c_{\text{pf}}(K_2^{(\beta)}, K_2)=0$ then 
$\min_{\beta<\omega^{\alpha}} c_{\text{pb}}(K_1^{(\beta)}, K_1)=0$. To see this, one needs to use the result from \cite{CAUSEY_C(K)_index} which shows that the Szlenk index of $\C(K)$ can be computed from the height of $K$. 

Since for every Banach space $E$, the space $\C(K, E)$ contains a canonical isometric copy of $\C(K)$, the following is an immediate consequence of Theorem \ref{main}(i).

\begin{cor}
\label{cor-main}
Let $K_1, K_2$ be infinite compact spaces and $E_1, E_2$ be Banach spaces with a nontrivial cotype such that $\C(K_1, E_1)$ is isomorphic to $\C(K_2, E_2)$. 
Then for each ordinal $\alpha$,
\[ \min_{\beta<\omega^{\alpha}} c_{\text{pb}}(K_1^{(\beta)}, K_1)=\min_{\beta<\omega^{\alpha}} c_{\text{pb}}(K_2^{(\beta)}, K_2) .\]
In particular, $c(K_1)=c(K_2)$ and $c_{\text{pb}}(K_1^{(\infty)}, K_1)=c_{\text{pb}}(K_2^{(\infty)}, K_2)$.
\end{cor}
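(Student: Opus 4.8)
The plan is to derive Corollary~\ref{cor-main} directly from Theorem~\ref{main}(i) by exploiting the canonical isometric embeddings $\C(K)\hookrightarrow\C(K,E)$ in both directions. First I would observe that since $E_2$ is a Banach space, the map sending $f\in\C(K_2)$ to the constant-valued-in-direction function $x\mapsto f(x)e$ (for a fixed norm-one $e\in E_2$) is an isometric embedding of $\C(K_2)$ into $\C(K_2,E_2)$; composing with the given isomorphism $\C(K_1,E_1)\simeq\C(K_2,E_2)$ shows that $\C(K_2)$ embeds isomorphically into $\C(K_1,E_1)$. Since $E_1$ has nontrivial cotype, Theorem~\ref{main}(i) (applied with the roles of $K_1$ and $K_2$ exchanged, and with $E=E_1$) yields, for every ordinal $\alpha$,
\[
\min_{\beta<\omega^{\alpha}} c_{\text{pb}}(K_2^{(\beta)}, K_2) \leq \min_{\beta<\omega^{\alpha}} c_{\text{pb}}(K_1^{(\beta)}, K_1).
\]

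Next I would run the symmetric argument: the canonical isometric copy of $\C(K_1)$ inside $\C(K_1,E_1)$, composed with the isomorphism $\C(K_1,E_1)\simeq\C(K_2,E_2)$, shows that $\C(K_1)$ is isomorphic to a subspace of $\C(K_2,E_2)$. Since $E_2$ has nontrivial cotype, Theorem~\ref{main}(i) (with $E=E_2$) gives the reverse inequality
\[
\min_{\beta<\omega^{\alpha}} c_{\text{pb}}(K_1^{(\beta)}, K_1) \leq \min_{\beta<\omega^{\alpha}} c_{\text{pb}}(K_2^{(\beta)}, K_2).
\]
Combining the two inequalities yields the claimed equality $\min_{\beta<\omega^{\alpha}} c_{\text{pb}}(K_1^{(\beta)}, K_1)=\min_{\beta<\omega^{\alpha}} c_{\text{pb}}(K_2^{(\beta)}, K_2)$ for every ordinal $\alpha$.

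Finally, the two ``in particular'' assertions are special cases: taking $\alpha=0$ gives $\min_{\beta<\omega^{0}}=\min_{\beta<1}$, i.e.\ the single value at $\beta=0$, which is $c_{\text{pb}}(K_i^{(0)},K_i)=c_{\text{pb}}(K_i,K_i)=c(K_i)$ by Lemma~\ref{rovnost_c=c_pf}(iii); and taking $\alpha$ large enough that $\omega^{\alpha}$ exceeds the Cantor--Bendixson rank of both $K_1$ and $K_2$ makes the nonincreasing net $\{c_{\text{pb}}(K_i^{(\beta)},K_i)\}_{\beta<\omega^{\alpha}}$ stabilize at its value on the perfect kernel $K_i^{(\infty)}$, so the minimum equals $c_{\text{pb}}(K_i^{(\infty)},K_i)$. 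Since the argument is essentially a two-line bookkeeping deduction, there is no real obstacle here; the only point requiring a modicum of care is confirming that the canonical ``scalar-to-vector'' embedding $\C(K)\to\C(K,E)$, $f\mapsto f(\cdot)e$, is genuinely isometric (which it is, as $\norm{e}=1$ forces $\sup_x\norm{f(x)e}=\sup_x\abs{f(x)}$), and that ``isomorphic to a subspace'' is transitive under composition with an isomorphism, which is immediate.
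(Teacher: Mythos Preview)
Your proposal is correct and follows exactly the approach indicated in the paper: the paper states (without further detail) that the corollary is an immediate consequence of Theorem~\ref{main}(i) via the canonical isometric copy of $\C(K)$ inside $\C(K,E)$, and you have simply written out this two-sided application explicitly, together with the $\alpha=0$ and perfect-kernel specializations.
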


Of course, isomorphisms of $\C(K)$ spaces do not preserve the property of being a perfect compact space, which is easily seen from the Miljutin theorem. However, as a corollary of our results we obtain the information that if $K_1$ is perfect and $\C(K_1)$ is isomorphic to $\C(K_2)$, then the perfect kernel of $K_2$ must be large in $K_2$, in the sense of cellularity. 

\begin{cor}
\label{prefect}
Let $K_1, K_2$ be infinite compact spaces and $E_1, E_2$ be Banach spaces with a nontrivial cotype. Assume that $K_1$ is perfect and $\C(K_1, E_1)$ is isomorphic to $\C(K_2, E_2)$. Then $c_{\text{pb}}(K_2^{(\infty)}, K_2)=c(K_2)$. Consequently, $c(K_2^{(\infty)}) \geq c(K_2)$.
\end{cor}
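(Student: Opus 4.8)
The plan is to read this off from Corollary~\ref{cor-main}, using only the trivial behaviour of the Cantor--Bendixson derivative on perfect spaces and one routine estimate. Since $K_1$ is perfect it has no isolated points, so $K_1^{(1)}=K_1$, and by transfinite induction $K_1^{(\beta)}=K_1$ for every ordinal $\beta$; in particular the perfect kernel is $K_1^{(\infty)}=K_1$. Hence $c_{\text{pb}}(K_1^{(\infty)},K_1)=c_{\text{pb}}(K_1,K_1)=c_{\text{pb}}(K_1)$, and since $K_1$ is compact this equals $c(K_1)$ by the fact (recorded after Theorem~\ref{main}, proved in Lemma~\ref{rovnost_c=c_pf}(iii)) that $c(L)=c_{\text{pb}}(L)=c_{\text{pf}}(L)$ for every compact space $L$.

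Corollary~\ref{cor-main} applies, since $E_1,E_2$ have nontrivial cotype and $\C(K_1,E_1)$ is isomorphic to $\C(K_2,E_2)$; its final (``in particular'') conclusions give simultaneously $c(K_1)=c(K_2)$ and $c_{\text{pb}}(K_1^{(\infty)},K_1)=c_{\text{pb}}(K_2^{(\infty)},K_2)$. Chaining these with the previous paragraph yields
\[
c_{\text{pb}}(K_2^{(\infty)},K_2)=c_{\text{pb}}(K_1^{(\infty)},K_1)=c(K_1)=c(K_2),
\]
which is the first assertion of the corollary. Note this value is infinite, since $c(L)\ge\omega$ for every infinite compact space $L$; consequently $K_2$ is not scattered and $K_2^{(\infty)}$ is a nonempty, hence infinite, perfect compact space.

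For the ``consequently'' clause it remains to observe that $c(K_2^{(\infty)})\ge c_{\text{pb}}(K_2^{(\infty)},K_2)$, after which the display gives $c(K_2^{(\infty)})\ge c(K_2)$. This is the only point needing an argument, and it is a routine estimate that I would either cite from Lemma~\ref{rovnost_c=c_pf} or prove directly: given a point-bounded family $\mathcal U$ of nonempty open subsets of $K_2$ meeting $K_2^{(\infty)}$, of order $k\in\en$, the traces $U\cap K_2^{(\infty)}$, $U\in\mathcal U$, form a family of nonempty relatively open subsets of $K_2^{(\infty)}$ of order at most $k$ and of the same infinite cardinality (each trace repeats at most $k$ times); and a family of nonempty open subsets of any space $Y$ of finite order $k$ and infinite cardinality has size at most $c(Y)$, by induction on $k$: take a maximal cellular subfamily $\mathcal D$ (of size $\le c(Y)$), find some $D\in\mathcal D$ met by an equally large subfamily of $\mathcal U$, then pass to the open subset $D$ (so $c(D)\le c(Y)$) while discarding $D$ itself to drop the order to $k-1$. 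I expect no real obstacle here; the only mild care is the cardinal-arithmetic bookkeeping at singular cardinals in that induction, while the reduction to Corollary~\ref{cor-main} is completely formal.
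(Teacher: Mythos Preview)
Your proposal is correct and follows essentially the same route as the paper's proof: both invoke Corollary~\ref{cor-main} to obtain $c_{\text{pb}}(K_2^{(\infty)},K_2)=c_{\text{pb}}(K_1^{(\infty)},K_1)=c(K_1)=c(K_2)$, then bound $c_{\text{pb}}(K_2^{(\infty)},K_2)$ above by $c(K_2^{(\infty)})$. The paper dispatches this last step in one line via $c_{\text{pb}}(K_2^{(\infty)},K_2)\le c_{\text{pb}}(K_2^{(\infty)},K_2^{(\infty)})=c_{\text{pb}}(K_2^{(\infty)})=c(K_2^{(\infty)})$ (using the remark before Lemma~\ref{rovnost_c=c_pf} and Lemma~\ref{rovnost_c=c_pf}(iii)), so your direct induction on the order---and the attendant singular-cardinal worry---is avoidable.
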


\begin{proof}
By Corollary \ref{cor-main}, $c_{\text{pb}}(K_2^{(\infty)}, K_2)=c_{\text{pb}}(K_1^{(\infty)}, K_1)=c_{\text{pb}}(K_1, K_1)=c_{\text{pb}}(K_1)=c(K_1)$. Moreover, by Theorem \ref{main}(i) for $\alpha=0$, $c(K_1)=c(K_2)$. Hence $c(K_2) =c_{\text{pb}}(K_2^{(\infty)}, K_2) \leq c_{\text{pb}}(K_2^{(\infty)}, K_2^{(\infty)})=c_{\text{pb}}(K_2^{(\infty)}) = c(K_2^{(\infty)})$.
\end{proof}

Next, we apply Theorem \ref{main} to the particular case when the considered compact spaces are Eberlein and of height $\omega+1$. The next result follows immediately from Theorem \ref{main}(ii) and the fact that in the case of an Eberlein compact space $K$ of height $\omega+1$, $c_{\text{pb}}(K^{(n)}, K)=c_{\text{pf}}(K^{(n)}, K)=\abs{K^{(n)}}$ for each $n \in \en$, see Lemma \ref{cel-Eberlein} below.

\begin{thm}
\label{main-Eberlein}
Let $K_1, K_2$ be Eberlein compact spaces of height $\omega+1$ and $E$ be a Banach space not containing an isomorphic copy of $c_0$. If $\C(K_1)$ is isomorphic to a subspace of $\C(K_2, E)$, then  
    \[ \lim_{n \rightarrow \infty} \abs{K_1^{(n)}} \leq \lim_{n \rightarrow \infty} \abs{K_2^{(n)}}.\]
\end{thm}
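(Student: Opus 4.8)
The plan is to deduce Theorem~\ref{main-Eberlein} directly from Theorem~\ref{main}(ii) together with the auxiliary fact (Lemma~\ref{cel-Eberlein}) that for an Eberlein compact space $K$ of height $\omega+1$ one has $c_{\text{pb}}(K^{(n)},K)=c_{\text{pf}}(K^{(n)},K)=\abs{K^{(n)}}$ for every $n\in\en$. First I would apply Theorem~\ref{main}(ii) with the ordinal $\alpha=1$, so that $\omega^{\alpha}=\omega$ and the relevant index set is $\{\beta<\omega\}=\en$. This yields
\[
\min_{n<\omega} c_{\text{pb}}(K_1^{(n)},K_1)\ \leq\ \min_{n<\omega} c_{\text{pf}}(K_2^{(n)},K_2).
\]
Then I would rewrite both sides using Lemma~\ref{cel-Eberlein}: for each $n$, $c_{\text{pb}}(K_1^{(n)},K_1)=\abs{K_1^{(n)}}$ and $c_{\text{pf}}(K_2^{(n)},K_2)=\abs{K_2^{(n)}}$, so the inequality becomes $\min_{n<\omega}\abs{K_1^{(n)}}\leq\min_{n<\omega}\abs{K_2^{(n)}}$.

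The remaining point is to identify these minima with the limits appearing in the statement. Since the sequences $\{\abs{K_i^{(n)}}\}_{n<\omega}$ are nonincreasing (the derived sets form a decreasing chain, so the cardinalities cannot increase), each sequence is eventually constant, and its minimum equals its eventual value, i.e. $\min_{n<\omega}\abs{K_i^{(n)}}=\lim_{n\to\infty}\abs{K_i^{(n)}}$. This is exactly the remark made in the text right after Theorem~\ref{main} about the minima being expressible as limits when $\alpha$ is nonzero; here I would just invoke it in the concrete case of cardinalities. Substituting gives $\lim_{n\to\infty}\abs{K_1^{(n)}}\leq\lim_{n\to\infty}\abs{K_2^{(n)}}$, which is the desired conclusion.

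One should check that the hypotheses of Theorem~\ref{main}(ii) are genuinely available: we need $K_1,K_2$ infinite compact (Eberlein compacta of height $\omega+1$ are compact, and they are infinite since the height exceeds any finite value — the derived set $K_i^{(n)}$ is nonempty for all finite $n$), $E$ a Banach space not containing an isomorphic copy of $c_0$, and $\C(K_1)$ isomorphic to a subspace of $\C(K_2,E)$; all of these are exactly the assumptions of Theorem~\ref{main-Eberlein}. So no extra work is needed there. The only genuinely nontrivial input is Lemma~\ref{cel-Eberlein}, whose proof will rely on the structure of Eberlein compacta of small height — in particular on the fact that such a space admits a point-finite (indeed $\sigma$-point-finite) $T_0$-separating family of open $F_\sigma$ sets, which lets one produce, around each isolated point of $K^{(n)}$ relative to $K$, a small open set avoiding the finitely many other chosen points, thereby realizing $\abs{K^{(n)}}$ as a point-finite (even point-bounded, after a further refinement) meeting family; since that lemma is quoted as already established, the proof of Theorem~\ref{main-Eberlein} itself is a short deduction. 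I expect no real obstacle in this argument beyond being careful that the passage from $\min$ to $\lim$ is legitimate, which it is because of monotonicity.

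\begin{proof}
Apply Theorem~\ref{main}(ii) with $\alpha=1$. Since $\omega^{1}=\omega$, we obtain
\[
\min_{n<\omega} c_{\text{pb}}(K_1^{(n)},K_1)\ \leq\ \min_{n<\omega} c_{\text{pf}}(K_2^{(n)},K_2).
\]
By Lemma~\ref{cel-Eberlein}, $c_{\text{pb}}(K_1^{(n)},K_1)=\abs{K_1^{(n)}}$ and $c_{\text{pf}}(K_2^{(n)},K_2)=\abs{K_2^{(n)}}$ for every $n\in\en$, hence
\[
\min_{n<\omega}\abs{K_1^{(n)}}\ \leq\ \min_{n<\omega}\abs{K_2^{(n)}}.
\]
For $i\in\{1,2\}$ the chain $K_i\supseteq K_i^{(1)}\supseteq K_i^{(2)}\supseteq\cdots$ gives a nonincreasing sequence of cardinals $\{\abs{K_i^{(n)}}\}_{n<\omega}$, which is therefore eventually constant, so $\min_{n<\omega}\abs{K_i^{(n)}}=\lim_{n\to\infty}\abs{K_i^{(n)}}$. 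Substituting this into the previous inequality yields
\[
\lim_{n\to\infty}\abs{K_1^{(n)}}\ \leq\ \lim_{n\to\infty}\abs{K_2^{(n)}},
\]
as claimed.
\end{proof}
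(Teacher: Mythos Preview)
Your proof is correct and follows exactly the approach indicated in the paper: the result is deduced immediately from Theorem~\ref{main}(ii) with $\alpha=1$ together with Lemma~\ref{cel-Eberlein}, and the identification of the minima with limits via monotonicity is precisely the remark made after Theorem~\ref{main}. The only point you might make explicit is that $K_i^{(n)}$ is infinite for each finite $n$ (needed to invoke Lemma~\ref{cel-Eberlein}), which follows since $K_i^{(n+1)}\supseteq K_i^{(\omega)}\neq\emptyset$.
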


Scattered Eberlein compacta form a nice and well-behaved class of compact spaces that has been studied closely in the last years, see e.g. \cite{Bell2007OnSE}, \cite{Bell_Marciszewski_univeral_eberlein} or \cite{Marciszewski_eberlein_problems}. Properties of spaces of continuous functions on this class of compact spaces were treated e.g. in \cite{Godefroy2000SubspacesO} and \cite{Marciszewski2003OnBS}. In particular, it follows from \cite[Theorem 4.8]{Godefroy2000SubspacesO} and \cite{cengiz} that if $K_1, K_2$ are Eberlein compacta of finite height and weight less then $\omega_{\omega}$, then the spaces $\C(K_1), \C(K_2)$ are isomorphic if and only if $\abs{K_1}=\abs{K_2}$ (to avoid confusion we note that while in the statement of \cite[Theorem 4.8]{Godefroy2000SubspacesO} it is assumed that weight of the considered compact space is $\omega_1$, the authors remark on page 800 that their results can be extended with similar proofs to the case when the weight is less than $\omega_{\omega}$, and we also recall that the weight of a scattered Eberlein compactum is equal to its cardinality). From this, it might be tempting to speculate that the isomorphism classes of spaces of continuous functions over scattered Eberlein compacta (at least those of weight less than $\omega_{\omega}$) are perhaps determined just by height and cardinality of the compact spaces. Theorem \ref{main-Eberlein} shows that this is not the case, because for Eberlein compacta of height $\omega+1$, the situation depends also on how the cardinality is distributed among the derived sets. Of course, it is also natural to wonder whether there is a similar behaviour for scattered Eberlein compact space of higher heights. Concerning this question, we note that Eberlein compacta of height $\omega+1$ are uniform Eberlein, see \cite{Bell2007OnSE}, while Eberlein compacta of height $\omega+2$ need not be, see \cite{BenyaminiStarbird_UniformEberlein} and \cite{Marciszewski_sequentiall}. Thus perhaps one should rather consider the class of scattered uniform Eberlein compact spaces instead. 

Further, it is a very interesting question whether the implication can be reversed, that is, whether one can describe the isomorphism classes of scattered (uniform) Eberlein compact spaces (perhaps just of weight less than $\omega_{\omega}$) only using the height and cardinalities of derived sets. This would provide a complete isomorphic classification of spaces of continuous functions on a natural and well-known class of nonmetrizable compacta. So far, outside of the class of metrizable spaces, the isomorphic classification of $\C(K)$ spaces is known only for the class of ordinal intervals \cite{Kislyakov_classification}, for the class of compact groups \cite{Pelczynski_book_1968}, for Eberlein compacta of finite height and weight less than $\omega_{\omega}$, as explained above, which has been generalized in a certain way in \cite{Aviles_classification}, or in some special cases, see \cite{Galego_classification}. Also, projective tensor products of $\C(K)$ spaces over countable compact spaces were recently classified \cite{GalegoCauseySamuel_projective_classification}.

Of course, there are other natural questions and open problems related to the above results. The most obvious of these is to try to replace the cardinal function $c_{\text{pb}}$ in Theorem \ref{main} by other cardinal invariants, if not in full generality, then at least in some classes of compact spaces, as in Theorem \ref{main-Eberlein}. For example, the result from \cite{Gordon3} states that if a pair of $\C(K)$ spaces is isomorphic by an isomorphism with distortion strictly less than $3$, then all derivatives of these compact spaces have the same cardinality. Also, we recall that the cardinality of the entire compact space is preserved by isomorphisms of its space of continuous functions, see \cite{cengiz}. Generalizations to the vector-valued case were given in \cite{GalegoVillamizar}. These results indicate it is not out of question that Theorem \ref{main} might still be true if we replace the cardinal function $c_{\text{pb}}$ by cardinality. We note that for metrizable compact spaces it is true, but here the reason for this is very specific. Indeed, if $K$ is a metrizable compact space, then it is either uncountable, in which case all of its derivatives have cardinality of the continuum (because they contain a copy of the Cantor space), or it is countable, hence scattered, and then the desired inequality follows for example from \cite{rondos-somaglia}.
Also, it might be possible to replace the function $c_{\text{pb}}$ by weight or spread in Theorem \ref{main}. On the other hand, it is not possible to obtain an analogy of Theorem \ref{main} for density or character, for general compact spaces. Indeed, as mentioned above, the equality of the minima does not in general hold even when $\alpha=0$.

As a subsequent result we prove that isomorphic embeddings of $\C(K)$ spaces preserve the spread of $K$. Even though the value of the spread of $K$ may be expressed by means of certain biorthogonal systems in $\C(K)$, see \cite[Proposition 3.2]{Koszmider_biorthogonal}, this does not seem to imply the desired information, at least not in some obvious way. Moreover, we will work again with vector-valued functions. More specifically, we prove the following result. We do not know whether the result holds under the weaker assumption that $E$ does not contain $c_0$.

\begin{thm}
\label{spread}
Let $K_1, K_2$ be compact Hausdorff spaces and $E$ be a Banach space with nontrivial cotype. If $\C(K_1)$ is isomorphically embedded into $\C(K_2, E)$ and $s(K_1)$ is infinite, then $s(K_1) \leq s(K_2)$.
\end{thm}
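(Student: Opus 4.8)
The plan is to deduce the statement from the monotonicity of cellularity already obtained in Theorem~\ref{main}(i) (case $\alpha=0$), combined with the classical identity $s(K)=\sup\{c(B)\setsep B\subseteq K\}$. The point requiring real work is that the subsets of $K_1$ which realize the spread are closed subspaces whose spaces of continuous functions are merely quotients — not subspaces — of $\C(K_1)$, so Theorem~\ref{main} is not directly applicable.

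The first step is a purely topological reduction. Since $c(B)=c(\overline B)$ for every $B\subseteq K$, and since the closure in $K$ of a transversal of a cellular family is a closed subspace whose set of isolated points equals that transversal and is dense in it, one gets
\[
s(K_1)=\sup\bigl\{\kappa\setsep\text{there is a closed }L\subseteq K_1\text{ whose set of isolated points is dense in }L\text{ and has size }\kappa\bigr\}.
\]
If $s(K_1)$ is infinite then $K_1$, hence $\C(K_2,E)$ and $K_2$, is infinite and $s(K_2)\ge\omega$, so it is enough to prove $|D|\le s(K_2)$ whenever $L\subseteq K_1$ is closed with a dense set $D$ of isolated points.

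Fix such $L,D$, set $\kappa=|D|$, and let $I=\{f\in\C(K_1)\setsep f|_L=0\}$, so that restriction identifies $\C(K_1)/I$ with $\C(L)$; choose $f_d\in\C(K_1)$ with $0\le f_d\le1$, $f_d(d)=1$ and $f_d\equiv0$ on $L\setminus\{d\}$. Modulo $I$ the $f_d$ are the indicator functions of the clopen singletons of $L$, so they span an isometric copy of $c_0(\kappa)$ in $\C(K_1)/I$, hence in $\C(K_2,E)/\overline{T(I)}$, a quotient of $\C(K_2,E)$. The heart of the argument is then to rerun the estimates behind Theorem~\ref{core} in this \emph{relativized} form, with $\C(K_1)$ replaced by the ideal-quotient $\C(L)$ and $\C(K_2,E)$ by $\C(K_2,E)/\overline{T(I)}$: using that $E$ has nontrivial cotype, one should extract from the images of the $f_d$'s under $T$ a discrete subspace of $K_2$ of size $\kappa$, which gives $s(K_2)\ge\kappa$; taking the supremum over $L$ then finishes the proof.

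The hard part is exactly this relativized extraction. It cannot be reduced to Theorem~\ref{core} as stated, because the $f_d$'s are disjointly supported only on $L$ — off $L$ their supports may overlap arbitrarily — and, as the coordinate functions of $\{0,1\}^{\omega_1}$ show for the unit-vector discrete set, there need be no large subfamily of $D$ whose bump functions can be taken boundedly overlapping in $K_1$. Nor does any formal transfer of invariants work: the copy of $c_0(\kappa)$ lives only in the quotient $\C(K_2,E)/\overline{T(I)}$ and in general neither lifts to $\C(K_2,E)$ nor descends to $\C(K_2)$ — were it always to do so one would obtain the false inequality $s(K_1)\le c(K_2)$ — while the biorthogonal system of $\C(K_1),M(K_1)$ encoding the discrete set of $K_1$ transfers under $T$ only to an anonymous bounded biorthogonal system of $\C(K_2,E),\C(K_2,E)^*$, carrying no information about $K_2$. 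Thus one is forced to reprove, in the localized setting, the quantitative derivative (Szlenk-index type) bookkeeping that drives Theorem~\ref{core}; this localized transfer lemma is where the cotype of $E$ should enter essentially — and presumably cannot be weakened to the mere absence of $c_0$, consistent with the remark preceding the statement — and it is the step I expect to be the technical crux.
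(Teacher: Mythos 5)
Your write-up is not yet a proof: after the (correct and fairly routine) topological reduction, the entire content of the theorem is deferred to an unproved ``relativized extraction'' step, which you yourself flag as the expected technical crux. Asserting that ``one should extract from the images of the $f_d$'s under $T$ a discrete subspace of $K_2$ of size $\kappa$'' is precisely the statement to be proved, and the framework you set up gives no handle on it: $\overline{T(I)}$ is not an ideal in $\C(K_2,E)$, the quotient $\C(K_2,E)/\overline{T(I)}$ is not a space of continuous functions on any compact space, and, as you note yourself, the copy of $c_0(\kappa)$ living there neither lifts to $\C(K_2,E)$ nor descends to $\C(K_2)$ in a way that yields topological information about $K_2$. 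So the argument stalls exactly at the point where it must produce a large finite-height (or discrete) subset of $K_2$, and rerunning Theorem~\ref{core} is indeed blocked for the reason you give (the $f_d$ are disjointly supported only relative to $L$). A genuine gap, by your own admission.

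For comparison, the paper's proof does not pass through $\C(L)$, ideals or quotients at all (and reduces spread to sets of \emph{finite height} rather than to closed sets with dense isolated points). For each $x$ in a discrete set $D\subseteq K_1$ it uses the element $\chi_{\{x\}}\in\C(K_1)^{**}$, the weak$^*$ continuity of $T^{**}$, and a known lemma guaranteeing that for a suitable $\ep>0$ the set $Y_x=\{y\in K_2:\ \exists e^*\in S_{E^*},\ \abs{\la T^{**}(\chi_{\{x\}}),\ep_y\otimes e^*\ra}>\ep\}$ is nonempty (Lemma~\ref{discrete}). Nontrivial cotype then enters twice through Lemma~\ref{point finite}(i): it makes the assignment $x\mapsto y_x\in Y_x$ finite-to-one, and, via a finite backward induction that builds disjointly supported bumps $f_n,f_{n-1},\dots,f_1$ in $\C(K_1)$ with $\norm{Tf_i(y_1)}>\ep$ for all $i$ whenever $Y_D^{(n)}\neq\emptyset$, it bounds $n$, so $Y_D=\{y_x\}_{x\in D}$ has finite height in $K_2$ and hence witnesses $s(K_2)\geq\abs{D}$. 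This bidual/evaluation-functional device is exactly the localized transfer your proposal is missing; supplying it (or an equivalent) is what your argument still requires.
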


Our last result is devoted to isomorphisms that have distortion strictly less than $3$. We recall that the well-known Amir-Cambern theorem says that the Banach-Mazur distance of a pair of $\C(K)$ spaces is at least $2$, if the associated compact spaces are not homeomorphic. It has been thought for some time that the number $2$ might possibly be replaced by $3$ in this result (see \cite{amir}). Even though it has been shown that this is not possible, as the number $2$ is optimal \cite{cohen-bound2}, it has been proved in \cite{Gordon3} that if the Banach-Mazur distance between $\C(K_1)$ and $\C(K_2)$ is strictly less than $3$, then the compact spaces $K_1$, $K_2$ must still be very similar, at least in some sense. Namely, it holds that for each ordinal $\alpha$, the cardinalities of the derivatives $K_1^{(\alpha)}$ and $K_2^{(\alpha)}$ are equal. This has been generalized to the case of functions with values in a Banach space with nontrivial cotype in \cite{CandidoGalego3}, and to the case of functions with values in a Banach space that does not contain $c_0$ in \cite{GalegoVillamizar}.
We show that in this case, the derived sets must share more common properties.

\begin{thm}
\label{dist3}
Let $K_1, K_2$ be compact Hausdorff spaces and $E$ be a Banach space. Assume that there exists an isomorphic embedding $T:\C(K_1) \rightarrow \C(K_2, E)$ with $\norm{T}\norm{T^{-1}}<3$. Then for each ordinal $\alpha$ such that $K_1^{(\alpha)}$ is infinite it holds:
\begin{itemize}
    \item[(i)] If $E$ has nontrivial cotype, then $c_{\text{pb}}(K_1^{(\alpha)}, K_1) \leq c_{\text{pb}}(K_2^{(\alpha)}, K_2)$ and $s(K_1^{(\alpha)}) \leq s(K_2^{(\alpha)})$.
    \item[(ii)] If $E$ does not contain $c_0$ isomorphically, then $c_{\text{pb}}(K_1^{(\alpha)}, K_1) \leq c_{\text{pf}}(K_2^{(\alpha)}, K_2)$.
\end{itemize}
\end{thm}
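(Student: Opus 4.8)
The plan is to deduce Theorem~\ref{dist3} from Theorem~\ref{core} (and, for the spread statement in item (i), from Theorem~\ref{spread} applied to suitable derived sets), using the elementary fact that $K^{(\alpha)}$ is itself a compact Hausdorff space and that any isomorphic embedding $T\colon\C(K_1)\to\C(K_2,E)$ restricts, in an appropriate sense, to the derived sets. First I would observe that the hypothesis $\norm{T}\norm{T^{-1}}<3$ forces, via the contrapositive of Theorem~\ref{core}, that the strict inequalities in conditions (i) and (ii) of that theorem \emph{cannot} hold for any admissible choice of parameters. Concretely, fixing an ordinal $\alpha$ with $K_1^{(\alpha)}$ infinite, I want to choose the parameters $\beta$, $n$, $k$ in Theorem~\ref{core} so that $\omega^{\beta}n$ and $\omega^{\beta}k$ land exactly at (or just past) the ordinal $\alpha$.

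The key technical point is an ordinal-arithmetic normalization: every ordinal $\alpha$ can be written in Cantor normal form, and the behaviour of the Cantor--Bendixson derivative under the operation $\alpha\mapsto\omega^{\beta}m$ is governed by the leading term. I would take $\beta$ to be the exponent of the leading term of $\alpha$ and $n=k=m$ the corresponding leading coefficient (or handle separately the cases where $\alpha$ is or is not of the form $\omega^{\beta}m$; when $\alpha$ is a successor or has lower-order terms one passes to a slightly larger "round" ordinal and uses monotonicity of $L\mapsto c_{\text{pb}}(L^{(\cdot)},L)$ along the derivatives). With $n=k$ the quantity $\max\{3,\frac{2n+2-k}{k}\}$ in Theorem~\ref{core} equals $\max\{3,\frac{n+2}{n}\}=3$, so Theorem~\ref{core} says: if $K_2^{(\omega^{\beta}k)}$ is infinite and the relevant strict inequality holds, then $\norm{T}\norm{T^{-1}}\ge 3$, contradicting our hypothesis. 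Hence the strict inequality fails, i.e. $c_{\text{pb}}(K_1^{(\omega^{\beta}n)},K_1)\le c_{\text{pb}}(K_2^{(\omega^{\beta}k)},K_2)$ in case (i) and $\le c_{\text{pf}}(K_2^{(\omega^{\beta}k)},K_2)$ in case (ii); translating $\omega^{\beta}n=\omega^{\beta}k$ back to $\alpha$ (again using that the derivatives between two successive "round" ordinals all carry the same value of the cardinal invariant, since the net is nonincreasing and the jump occurs only at limits of cofinality $\ge$ the relevant rank) yields the stated inequality at $\alpha$. One must also dispatch the edge case where $K_2^{(\omega^{\beta}k)}$ is finite: then $c_{\text{pf}}(K_2^{(\alpha)},K_2)$ or $c_{\text{pb}}(K_2^{(\alpha)},K_2)$ is finite, and a separate (easier) argument — e.g. via the cardinality-preservation results of \cite{Gordon3}, \cite{CandidoGalego3}, \cite{GalegoVillamizar} quoted above, which already give $\abs{K_1^{(\alpha)}}=\abs{K_2^{(\alpha)}}$ under distortion $<3$ — forces $K_1^{(\alpha)}$ finite too, contradicting the assumption that $K_1^{(\alpha)}$ is infinite, or directly bounds the left-hand side.

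For the spread inequality $s(K_1^{(\alpha)})\le s(K_2^{(\alpha)})$ in item (i), I would run a parallel argument: restrict attention to the compact spaces $K_1^{(\alpha)}$ and $K_2^{(\alpha)}$, note that an isomorphic embedding of $\C(K_1)$ into $\C(K_2,E)$ induces, after the standard quotient/restriction procedure for $\C(K)$ spaces over closed subsets, an isomorphic embedding of $\C(K_1^{(\alpha)})$ into $\C(K_2^{(\alpha)},E)$ with distortion no larger than $\norm{T}\norm{T^{-1}}<3$ (this uses that $K^{(\alpha)}$ is closed in $K$ and the Tietze-type extension, together with a projection argument as in the proofs leading to Theorem~\ref{spread}), and then apply Theorem~\ref{spread} to this restricted embedding. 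The main obstacle I anticipate is precisely this reduction step — making rigorous that distortion-$<3$ embeddings pass to derived subspaces without loss, and that the quantities $c_{\text{pb}}(L^{(\beta)},L)$ relate correctly to the "intrinsic" invariants of the restricted space $L^{(\alpha)}$ — since $c_{\text{pb}}(K^{(\alpha)},K)$ is computed relative to open sets of $K$, not of $K^{(\alpha)}$; one needs the lemmas of the next section (the analogues of Lemma~\ref{rovnost_c=c_pf} and the Eberlein lemma) to control the discrepancy, or to argue directly inside $K$ as in the proof of Theorem~\ref{core}. Once that bookkeeping is in place, items (i) and (ii) follow by the contrapositive-of-Theorem~\ref{core} mechanism described above with $n=k$.
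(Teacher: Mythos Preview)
Your proposal has two genuine gaps that make the approach unworkable.

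\textbf{Circularity.} You want to invoke Theorem~\ref{core} with $n=k$ to get the lower bound $3$ and then take the contrapositive. But look at the proof of Theorem~\ref{core} in the paper: it explicitly says ``We will only prove that the lower bound $\frac{2n+2-k}{k}$ is true, since the boundedness by the number $3$ follows from Theorem~\ref{dist3}.'' When $n=k\ge 2$ one has $\frac{2n+2-k}{k}=\frac{n+2}{n}<3$, so the $3$ in the $\max$ is doing all the work, and that $3$ is \emph{imported from} Theorem~\ref{dist3}. Your argument is therefore circular except in the single case $n=k=1$, which reaches only gamma ordinals $\omega^\beta$. For a general ordinal $\alpha$ (say $\alpha=\omega+1$) there is no way to write $\alpha=\omega^\beta m$, and your proposed fix --- ``the derivatives between two successive `round' ordinals all carry the same value of the cardinal invariant'' --- is simply false: take $K=[0,\omega\cdot 2]$, where $c_{\text{pb}}(K^{(0)},K)$, $c_{\text{pb}}(K^{(1)},K)$, $c_{\text{pb}}(K^{(2)},K)$ are all different though $0,1,2$ lie between consecutive gamma numbers.

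\textbf{The spread step.} You propose to manufacture an isomorphic embedding $\C(K_1^{(\alpha)})\to\C(K_2^{(\alpha)},E)$ with distortion $<3$ from $T$ via ``the standard quotient/restriction procedure'' and Tietze extension. No such procedure exists: restriction $\C(K_i)\to\C(K_i^{(\alpha)})$ is a quotient, and there is no mechanism that intertwines these quotients with $T$, let alone one that controls the distortion. You yourself flag this as ``the main obstacle''; it is in fact fatal.

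The paper's proof is entirely different and direct. For each $x\in K_1$ it uses the set
\[
\Lambda_x=\{y\in K_2:\ \norm{Tf(y)}>\ep\text{ for every }f\in\C(K_1,[0,1])\text{ with }f(x)=1\},
\]
and cites prior work (\cite{GalegoVillamizar}, \cite{rondos-scattered-subspaces}) for the crucial fact --- which is exactly where the hypothesis $\norm{T}\norm{T^{-1}}<3$ enters --- that $\Lambda_x\cap K_2^{(\alpha)}\neq\emptyset$ whenever $x\in K_1^{(\alpha)}$. Given this, a point-bounded system of bumps meeting $K_1^{(\alpha)}$ is pushed forward via Lemma~\ref{point finite} to a point-bounded (resp.\ point-finite) family of open sets in $K_2$ that automatically meets $K_2^{(\alpha)}$, giving the $c_{\text{pb}}$/$c_{\text{pf}}$ inequalities at every $\alpha$. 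For the spread, one picks $y_x\in\Lambda_x\cap K_2^{(\alpha)}$ for each $x$ in a discrete $D\subseteq K_1^{(\alpha)}$ and runs the finite-height argument of Lemma~\ref{discrete} directly inside $K_2^{(\alpha)}$; no induced embedding between derived spaces is needed.
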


\section{Definitions and preliminary results}

In this section, we first collect several notions and conventions. To start with, we stress that all topological spaces are assumed to be Hausdorff. Next, for a compact space $K$ and a Banach space $E$, let $\C(K, E)$ denote the space of all continuous $E$-valued functions endowed with the sup-norm. We write $\C(K)$ for $\C(K, \er)$. For a function $f \in \C(K)$, $\text{spt} f$ stands for the \emph{support} of $f$, that is, the closure of the set $\{ x \in K: f(x) \neq 0\}$.

Further, we recall that the \emph{derivative} of a topological space $X$ is defined recursively as follows. The set $X^{(1)}$ is the set of accumulation points of $X$, and for an ordinal $\alpha>1$, let $X^{(\alpha)}=(X^{\beta})^{(1)}$, if $\alpha=\beta+1$, and $X^{(\alpha)}=\bigcap_{\beta<\alpha} X^{(\beta)}$ when $\alpha$ is a limit ordinal. Moreover, let $X^{(0)}=X$. It is well-known that there exists an ordinal $\alpha$ such that $X^{(\alpha)}=X^{(\beta)}$ for each $\beta>\alpha$. For such an ordinal $\alpha$, we denote $X^{(\infty)}=X^{(\alpha)}$. The set $X^{(\infty)}$ is called the \emph{perfect kernel} of $X$. The space is called \emph{perfect} if $X=X^{(\infty)}$, that is, if $X$ has no isolated points.
The topological space $X$ is called \emph{scattered} if its perfect kernel is empty, and the minimal $\alpha$ such that $X^{(\alpha)}$ is empty is called the \emph{height} of $X$ and is denoted by $ht(X)$. If $X$ is not scattered, then we define $ht(X)$ to be $\infty$. It is easy to see that if $K$ is a scattered compact space, then $ht(K)$ is always a successor ordinal.
Further, we recall that a gamma number is an ordinal which is either $0$ or of the form $\omega^{\alpha}$ for some ordinal $\alpha$. Given an ordinal $\alpha$, we let $\Gamma(\alpha)$ denote the minimum gamma number which is not less than $\alpha$. This minimum exists due to the fact that $\omega^{\alpha} \geq \alpha$ for any ordinal $\alpha$. For completeness we set $\Gamma(\infty)=\infty$, and we note that we use the convention that $\alpha<\infty$ for each ordinal $\alpha$. It has been known since the fundamental paper of Bessaga and Pelczynski \cite{BessagaPelcynski_classification} that gamma numbers play a crucial role in the isomorphic theory of $\C(K)$ spaces. Actually, they play a crucial role in the isomorphic theory of general Banach spaces, since the Slzenk index of each Banach space, if it is bounded, is always a gamma number, see e.g. \cite[Proposition 3.3]{Lancien2006}. It is therefore not surprising that they appear also in our results.

 We further recall that a compact space is \emph{Eberlein} if it is homeomorphic to a weakly compact subset of a Banach space, and it is \emph{uniform Eberlein} if it is homeomorphic to a weakly compact subset of a Hilbert space. We recall that a Banach space $E$ has \emph{nontrivial cotype} if there exists a real number $q \geq 2$ and a constant $C>0$ such that for each $n \in \en$ and $e_1, \ldots, e_n \in E$, 
\[ (\sum_{i=1}^n \norm{e_i}^q)^{\frac{1}{q}} \leq C(\int_{0}^1 \norm{\sum_{i=1}^n r_i(t) e_i}^2 dt)^{\frac{1}{2}},\]
where $r_i:[0, 1] \rightarrow \er$ are the Rademacher functions, defined for $t \in [0, 1]$ by $r_i(t)=\text{sign } (\sin(2^i \pi t))$. It is well-known that if a Banach space has nontrivial cotype, then it does not contain an isomorphic copy of $c_0$. If $E_1, E_2$ are Banach spaces and $T: E_1 \rightarrow E_2$ is an isomorphic embedding, then the \emph{distortion} of $T$ is the number $\norm{T}\norm{T^{-1}}$. The \emph{Banach-Mazur} distance between $E_1$ and $E_2$ is defined to be the infimum of distortions of all surjective isomorphisms of $E_1$ and $E_2$, if the spaces are isomorphic, otherwise it is $\infty$. 

Now, we collect some information about the cardinal numbers $c(A, X)$, $c_{\text{pb}}(A, X)$ and $c_{\text{pf}}(A, X)$, that were defined in the introduction. The advantage of these relative invariants compared to the regular notion of cellularity is that they carry some information about the way $A$ is topologically situated in $X$. Moreover, unlike classical cellularity, they are monotone, meaning that if $A \subseteq B \subseteq X$, then $c(A, X) \leq c(B, X)$, $c_{\text{pb}}(A, X) \leq c_{\text{pb}}(B, X)$ and $c_{\text{pf}}(A, X) \leq c_{\text{pf}}(B, X)$. The concept of relative cellularity has been studied in the context of topological groups, see e.g. \cite{PASYNKOV_relative_cellularity} or \cite{gartside-maltsevretral}. We have not found any reference for the numbers $c_{\text{pb}}(A, X)$ and $c_{\text{pf}}(A, X)$. Of course, point-bounded or point-finite families of open sets are used quite frequently, but probably not so much in this relative setting. Further, it is clear that $c(A, X) \leq c(A)$ for each subset of $A$ of $X$, and similarly for $c_{\text{pb}}$ and $c_{\text{pf}}$. A natural example where this inequality is strict is the case when $X=\beta \omega$ and $A=\beta \omega \setminus \omega$. Indeed, it is well-known that $\beta \omega$ is separable, hence 
\[c(\beta \omega \setminus \omega, \beta \omega) \leq c(\beta \omega) \leq d(\beta \omega)=\omega,\]
where $d$ stands for density. On the other hand, the cellularity of $\beta \omega \setminus \omega$ is uncountable. This follows from the fact that, any almost disjoint family $\mathcal{P}$ of subsets of $\omega$ induces a pairwise disjoint family of nonempty open subsets of $\beta \omega \setminus \omega$, and it is a standard fact that there exist uncountable almost disjoint families of natural numbers. 

Further, it is natural to ask for which subsets $A$ of a general compact space $K$, some of the numbers $c(A, K)$, $c_{\text{pb}}(A, K)$ and $c_{\text{pf}}(A, K)$ coincide.
While it is well-known that the equality $c(A)=c_{\text{pb}}(A)$ holds for each topological space $A$, the equality $c(A, K)=c_{\text{pb}}(A, K)$ for a general subset $A$ of $K$ seems to be unclear.
It was proved in \cite[Lemma 1.2]{Rosenthal_injective_C(S)} (see also \cite[Lemma 2.2]{TODORCEVIC_chain}, \cite[Lemma 4.2]{Rosenthal_L^infty} or \cite[Theorem 3.8]{TALL_chain_condition} for slightly different proofs) using the Baire category theorem, that in a compact ccc space, point-finite families of open sets must be countable. Thus one might expect that if $\mathcal{U}$ is a point-finite family of open sets which is meeting a subset $A$ of a compact space $K$, then $\abs{\mathcal{U}} \leq c(A, K)$. While we do not know whether this holds or not, the direct modification of any of the above proofs to the relative setting and to the case of arbitrary large cardinalities, only seems to work when the set $A$ is open, or, more generally, when $A$ is contained in the closure of its interior. If this were true also when the set $A$ is compact, most of the results of this paper for Banach spaces $E$ that do not contain $c_0$ and those with nontrivial cotype would be united.

\begin{lemma}
\label{rovnost_c=c_pf}
Let $K$ be a compact space and $A$ is an infinite subset of $K$.
\begin{itemize}
    \item[(i)] It holds $c(A, K) \leq c(A) \leq s(A)$, and
    \[c(A, K) \leq c_{\text{pb}}(A, K) \leq c_{\text{pf}}(A, K) \leq \min \{c_{\text{pf}}(A), c(K)\}.\]
    \item[(ii)] It holds $c(A)=c(\overline{A})$, $c(A, K)=c(\overline{A}, K)$, $c_{\text{pb}}(A, K)=c_{\text{pb}}(\overline{A}, K)$, and $c_{\text{pf}}(A, K)=c_{\text{pf}}(\overline{A}, K)$.
    \item[(iii)] If $A$ is open, then $c(A)=c(A, K)=c_{\text{pf}}(A, K)=c_{\text{pf}}(A)$.
    \item[(iv)] If $A \subseteq \overline{int(A)}$, then $c(A, K)=c_{\text{pf}}(A, K)$.
    \item[(v)] If $A$ is dense in $K$, then $c(A, K)=c(A)=c(K)$.
\end{itemize}
\end{lemma}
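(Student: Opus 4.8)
The plan is to reduce everything to two elementary observations plus one nontrivial input. The elementary observations are: (a) a nonempty open subset of $K$ meets $\overline{A}$ if and only if it meets $A$; and (b) on a \emph{point-finite} family $\mathcal{U}$ of open sets the restriction map $U\mapsto U\cap A$ has \emph{finite} fibres over nonempty sets (if $U_1\cap A=U_2\cap A\neq\emptyset$, a point of this set lies in both $U_i$, and only finitely many members of $\mathcal{U}$ contain it), so on the subfamily meeting $A$ this map preserves cardinality for infinite families. The nontrivial input is the non-ccc form of Rosenthal's lemma: in a compact space $K$, every point-finite family $\mathcal{U}$ of nonempty open sets admits a pairwise disjoint family $\mathcal{V}$ of nonempty open sets of the same cardinality each of whose members is contained in a member of $\mathcal{U}$. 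The references cited after the statement of Lemma~\ref{rovnost_c=c_pf} treat the ccc case; the general version is the expected Baire-category strengthening, applied to the decreasing sequence of open sets $\{x\in K: x$ lies in at least $n$ members of $\mathcal{U}\}$, whose intersection is empty.

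Granting this, parts (i), (ii), (v) are formal. For (ii) and (v) I would use (a): a family of open subsets of $K$ meets $A$ iff it meets $\overline{A}$, which gives the three equalities of (ii) involving $(\cdot\,,K)$ at once; and I would use the standard fact that a dense subspace $D$ of a space $Y$ satisfies $c(D)=c(Y)$ — a cellular family of $Y$ restricts to one of $D$, and conversely if $\{U_i\cap D\}$ is cellular in $D$ with the $U_i$ open in $Y$, then $U_i\cap U_j$ is open and misses the dense set $D$, hence is empty — applied to $A\subseteq\overline{A}$ for $c(A)=c(\overline A)$ and to $A\subseteq K$ (noting every nonempty open subset of $K$ then meets $A$) for (v). For (i): restricting families of $K$ meeting $A$ to the space $A$ gives $c(A,K)\le c(A)$ and, via (b) together with $c_{\text{pf}}(A)\ge\aleph_0$ (an infinite Hausdorff space carries, for each $n\in\en$, a family of $n$ nonempty open sets isolating $n$ chosen points, which handles the finite-family case), also $c_{\text{pf}}(A,K)\le c_{\text{pf}}(A)$; the inequalities $c(A,K)\le c_{\text{pb}}(A,K)\le c_{\text{pf}}(A,K)$ are trivial (order $1$ is point-bounded, point-bounded is point-finite); $c(A)\le s(A)$ is the case $B=A$ of the formula $s(A)=\sup\{c(B):B\subseteq A\}$ recalled in the introduction; and $c_{\text{pf}}(A,K)\le c(K)$ follows directly from the Rosenthal-type input applied to $\mathcal{U}$ as a family in $K$.

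The heart is the inequality $c_{\text{pf}}(A,K)\le c(A,K)$ in (iii) and (iv). Assume first $A$ is open and let $\mathcal{U}$ be a point-finite family of nonempty open sets meeting $A$; replacing each $U$ by $U\cap A$ (still open in $K$, still nonempty, cardinality unchanged by (b)) we may assume every member of $\mathcal{U}$ lies in $A$. The Rosenthal-type input, applied in $K$, produces a pairwise disjoint family $\mathcal{V}$ with $\abs{\mathcal{V}}=\abs{\mathcal{U}}$ whose members sit inside members of $\mathcal{U}$, hence inside $A$; so $\mathcal{V}$ is cellular in $K$ and meets $A$, and $\abs{\mathcal{U}}\le c(A,K)$. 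Taking suprema and combining with (i) yields $c(A,K)=c_{\text{pf}}(A,K)$ for open $A$; the remaining equalities of (iii), $c(A)=c(A,K)$ and $c_{\text{pf}}(A)=c_{\text{pf}}(A,K)$, hold because for open $A$ the open subsets of $A$ are exactly the open subsets of $K$ contained in $A$ (one inclusion of each equality is trivial, the other is observation (b)). For (iv): if $A\subseteq\overline{\operatorname{int}(A)}$, then any nonempty open set meeting $A$ meets $\overline{\operatorname{int}(A)}$, hence meets $\operatorname{int}(A)$; replacing each member of a point-finite family meeting $A$ by its intersection with $\operatorname{int}(A)$ reduces us (cardinality unchanged) to a point-finite family meeting the \emph{open} set $\operatorname{int}(A)$ with all members inside $\operatorname{int}(A)\subseteq A$, and the open case just established supplies a cellular family of $K$ inside $A$ of the same cardinality, giving $c_{\text{pf}}(A,K)\le c(A,K)$ once more.

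I expect the only genuine obstacle to be the non-ccc form of Rosenthal's lemma itself — both proving it and, more importantly for the relative statements here, extracting a cellular \emph{refinement} (members contained in members of $\mathcal{U}$) rather than merely a cellular family of the right cardinality, since it is the containment that transfers the property of ``meeting $A$''. Everything else is supremum bookkeeping, the one delicate point being the finite-cardinality edge cases, where ``point-finite'' is no stronger than ``point-bounded'' and one falls back on the well-known identity $c=c_{\text{pb}}$.
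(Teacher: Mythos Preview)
Your proposal is correct and rests on the same Baire-category idea as the paper. For (i), (ii), (iv), (v) the arguments essentially coincide with the paper's. For (iii) the paper does not extract an abstract ``Rosenthal with refinement'' lemma but runs the argument directly in the relative setting: assuming a point-finite $\mathcal U$ meeting open $A$ with $|\mathcal U|>c(A,K)$, one covers $A$ by the closed sets $G_n=\{x\in A:x\text{ lies in at most }n\text{ members of }\mathcal U\}$, shows that at most $c(A,K)$ members of $\mathcal U$ can meet $\operatorname{int}G_n$ (via a maximal cellular family $\mathcal C_n$ of open subsets of $A$ each meeting at most $n$ sets of $\mathcal U$), and concludes by Baire that some $U\in\mathcal U$ misses every $\operatorname{int}G_n$, contradicting that the boundaries $G_n\setminus\operatorname{int}G_n$ are nowhere dense in $A$. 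This is precisely the ``expected Baire-category strengthening'' you allude to, carried out in situ rather than factored out as a black box. One caution about your black-box formulation: demanding a single cellular refinement $\mathcal V$ with $|\mathcal V|=|\mathcal U|$ is a hair stronger than what the argument actually delivers or than what (iii) requires; when $|\mathcal U|$ is singular of countable cofinality the Baire argument only yields cellular refinements of sizes cofinal in $|\mathcal U|$ (from the $\mathcal C_n$'s), not necessarily a single witness of size $|\mathcal U|$. That still gives $|\mathcal U|\le c(A,K)$, but not via the lemma as you stated it. The paper's proof-by-contradiction packaging sidesteps this attainment wrinkle entirely.
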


\begin{proof}
All the inequalities in (i) are elementary except for $c_{\text{pf}}(A, K) \leq c(K)$. However, it is trivial that $c_{\text{pf}}(A, K) \leq c_{\text{pf}}(K)$, and it follows from (iii) that $c_{\text{pf}}(K)=c(K)$.

Further, the equality $c(A)=c(\overline{A})$ follows due to the observation that for a family $\mathcal{U}$ of open subsets of $K$ it holds that the family $\{U \cap \overline{A}: U \in \mathcal{U}\}$ is cellular in $\overline{A}$ if and only if the family $\{U \cap A: U \in \mathcal{U}\}$ is cellular in $A$. 
The rest of the proof of (ii) is immediate, since the closure of $A$ intersects the same open subsets of $K$ as $A$ itself.

For the proof of (iii), let $A$ is open. It is simple to realize that $c(A)=c(A, K)$ and $c_{\text{pf}}(A, K)=c_{\text{pf}}(A)$. Thus, we assume that there is a point-finite family $\mathcal{U}$ of nonempty open sets which is meeting $A$, of cardinality strictly greater than $c(A, K)$, and we seek a contradiction. For each $n \in \en$ we consider the sets 
\begin{equation}
\nonumber
\begin{aligned}
&G_n=\{x \in A: x \text{ is contained in at most } n \text{ distinct sets from } \mathcal{U}\}, \\&
\mathcal{U}_n=\{U \in \mathcal{U}:\text{int} G_n \cap U\neq \emptyset\}.
\end{aligned}
\end{equation}
The main part of the proof relies in showing that for each $n \in \en$, the set $\mathcal{U}_n$ has cardinality at most $c(A, K)$. 

To this end, we find a maximal cellular collection $\mathcal{C}_n$ of open subsets of $A$, each intersecting at most $n$ sets from $\mathcal{U}$. The cardinality of such a collection is at most $c(A, K)$. We claim that each member of the system $\mathcal{U}_n$ intersects a member of $\mathcal{C}_n$. Thus, we assume that there exists some $V \in \mathcal{U}_n$ that does not intersect any member of $\mathcal{C}_n$ and we seek a contradiction. Let $n_0$ stand for the maximal $k \in \{0, \ldots, n-1\}$ such that there exist distinct sets $U_{1}, \ldots U_k$ in the system $\mathcal{U}$ such that the intersection
\[\text{int} G_n \cap V \cap U_1 \cap \ldots \cap U_k\]
is nonempty.
We choose $U_1, \ldots, U_{n_0} \in \mathcal{U}$ such that the intersection $ \text{int} G_n \cap V\cap U_1 \cap \ldots \cap U_{n_0}$ is nonempty (if $n_0=0$, we do not select any sets $U_i$). But then, the set $\text{int} G_n \cap V \cap U_1 \cap \ldots \cap U_{n_0}$ is a nonempty open subset of $A$ which intersect exactly $n_0+1 \leq n$ sets from $\mathcal{U}$ (namely, it intersects only the sets $V, U_1, \ldots, U_{n_0}$), but it does not meet $\mathcal{C}_n$, which contradicts the maximality of the system $\mathcal{C}_n$. Thus each member of the system $\mathcal{U}_n$ intersects a member of $\mathcal{C}_n$. On the other hand, by the definition of $\C_n$, each member of $\C_n$ intersects at most $n$ members of $\mathcal{U}_n$. Hence $\abs{\mathcal{U}_n} \leq \abs{\mathcal{C}_n} \leq c(A, K)$.

Further, by our assumption, $\abs{\mathcal{U}}>c(A, K)$, hence it follows that the set $\mathcal{U} \setminus \bigcup_{n=1}^{\infty} \mathcal{U}_n$ is nonempty, that is, there exists a set $U \in \mathcal{U}$ which does not intersect $\bigcup_{n=1}^{\infty} \text{int } G_n$. On the other hand, it is clear that $U \cap A \subseteq \bigcup_{n=1}^{\infty} G_n$. Moreover, since each $G_n$ is closed in $A$, the set $G_n \setminus \text{int } G_n$ is a nowhere dense subset of $A$. Thus the nonempty open set $U \cap A$ is contained in the meager set $\bigcup_{n=1}^{\infty} (G_n \setminus \text{int } G_n)$, which contradicts the Baire category theorem. 

Further, the item (iv) follows by combination of (ii) and (iii), since by the monotonicity we have $c(int A, K) \leq c(A, K) \leq c(\overline{int(A)}, K)$ and 
$c_{\text{pf}}(int A, K) \leq c_{\text{pf}}(A, K) \leq c_{\text{pf}}(\overline{int(A)}, K)$, and hence by (ii) and (iii), all these cardinals are the same.

Finally, (v) follows immediately by (ii). 
\end{proof}

In general, it seems to be a difficult task to try to identify the above-defined relative cardinal invariants with some more familiar cardinal numbers, in some concrete classes of compact spaces. One example where this is possible is the following.

\begin{lemma}
\label{cel-Eberlein}
Let $K$ be a scattered Eberlein compact space and $A$ be an infinite subset of $K$. Then:
\begin{itemize}
    \item[(i)] It holds $c_{\text{pf}}(A, K)=\abs{A}$.
    \item[(ii)] If height of $K$ is $\omega+1$, then $c_{\text{pb}}(A, K)=\abs{A}$.
    \end{itemize}
\end{lemma}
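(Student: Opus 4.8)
The bound $c_{\text{pf}}(A,K)\le|A|$ — hence $c(A,K)\le c_{\text{pb}}(A,K)\le c_{\text{pf}}(A,K)\le|A|$ — is immediate and holds for any $K$: if $\mathcal U$ is a point-finite family of nonempty open subsets of $K$ meeting $A$, a choice $a_U\in U\cap A$ gives a finite-to-one map $\mathcal U\to A$, so $|\mathcal U|\le|A|\cdot\omega=|A|$. The content of both parts is the reverse inequality, and the plan is to establish it by exhibiting, for every $\mu<|A|$, a point-finite (resp.\ point-bounded) family of nonempty open sets meeting $A$ of cardinality $>\mu$ (note that the suprema defining $c_{\text{pf}}$ and $c_{\text{pb}}$ need not be attained). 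Countable $A$ I treat directly; for uncountable $A$ I reduce to a subset of regular uncountable cardinality, where a counting argument with a suitable separating family applies.

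\emph{Countable case.} If $A$ is countably infinite then, since Eberlein compacta are Fréchet--Urysohn and $\overline A$ is again Eberlein, some infinite subset of $A$ accumulates at a point $p\in\overline A$ and one obtains pairwise distinct $a_k\in A\setminus\{p\}$ with $a_k\to p$. Then $C=\{a_k:k\in\omega\}\cup\{p\}$ is a closed copy of $\omega+1$; extending $a_k\mapsto 2^{-k}$, $p\mapsto 0$ to a continuous $\widetilde\phi\colon K\to[0,1]$ by Tietze and pulling back the pairwise disjoint intervals $(\tfrac23 2^{-k},\tfrac43 2^{-k})$ gives a pairwise disjoint (hence order $1$) family of open sets meeting $A$ of cardinality $\omega$. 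So $c(A,K)\ge\omega=|A|$ and all the invariants equal $|A|$; this step uses neither scatteredness nor the height hypothesis.

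\emph{Uncountable case.} Fix a $T_0$-separating family $\mathcal V=\bigcup_{n\in\omega}\mathcal V_n$ of clopen subsets of $K$ that is $\sigma$-point-finite: in case (i) this is the classical fact that Eberlein compacta admit a $\sigma$-point-finite $T_0$-separating family of cozero sets, refined to clopen sets (with the same order on each $\mathcal V_n$) since a scattered compact space is zero-dimensional; in case (ii) one moreover arranges each $\mathcal V_n$ to have some finite order $k_n$, which is possible because $K$, having height $\omega+1$, is uniform Eberlein by \cite{Bell2007OnSE}, and uniform Eberlein compacta admit such a family (readily seen by embedding $K$ into the unit ball of a Hilbert space with the weak topology). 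Now let $\mu<|A|$; if $\mathrm{cf}(|A|)>\omega$ put $A'=A$, otherwise $|A|$ is a limit cardinal, so $\mu^+<|A|$, and choose $A'\subseteq A$ with $|A'|=\lambda:=\mu^+$, a regular uncountable cardinal in either case. Set $\mathcal V'=\{V\in\mathcal V:V\cap A'\ne\emptyset\}$. Since each $V\in\mathcal V'$ is open and $A'$ is dense in $\overline{A'}$, $\mathcal V'$ is $T_0$-separating on $\overline{A'}$, which is a closed subspace of $K$ and hence itself a scattered Eberlein compactum; therefore $\overline{A'}$ embeds into $\{0,1\}^{\mathcal V'}$, and using that weight equals cardinality for scattered Eberlein compacta, $\lambda\le|\overline{A'}|=w(\overline{A'})\le|\mathcal V'|$. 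Conversely $|\mathcal V'|\le\lambda$ by the finite-to-one estimate applied to each $\mathcal V_n\cap\mathcal V'$. Hence $|\mathcal V'|=\lambda$, and since $\lambda$ is regular uncountable while $\mathcal V'=\bigcup_n(\mathcal V_n\cap\mathcal V')$, some $\mathcal V_{n_0}\cap\mathcal V'$ has cardinality $\lambda$. This subfamily consists of nonempty open sets meeting $A$, has cardinality $\lambda>\mu$, and is point-finite — resp.\ of order $\le k_{n_0}$, hence point-bounded — being a subfamily of $\mathcal V_{n_0}$. Letting $\mu\to|A|$ yields $c_{\text{pf}}(A,K)\ge|A|$, resp.\ $c_{\text{pb}}(A,K)\ge|A|$.

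The reduction and the two cardinal counts are routine; the leverage comes from two structural facts, and locating and correctly invoking them is the crux. The first is the clopen $T_0$-separating family with the right $\sigma$-point-finiteness (i) or $\sigma$-point-boundedness (ii) — the latter being exactly where the hypothesis $ht(K)=\omega+1$ enters, through Bell's theorem that such $K$ are uniform Eberlein. The second, and more delicate, is the identity $w(L)=|L|$ for scattered Eberlein $L$: it is what forces $|\mathcal V'|$ to be as large as $|A|$ rather than merely uncountable, and without it one would only recover $c_{\text{pf}}(A,K)\ge\aleph_1$. A secondary point needing care is the case $\mathrm{cf}(|A|)=\omega$, handled by working with a subset of cardinality $\mu^+$ for each $\mu<|A|$ rather than with $A$ at once.
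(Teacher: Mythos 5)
Your argument is correct (one small imprecision aside), but it takes a genuinely different route from the paper's, which is much shorter: for (i) the paper simply quotes \cite[Lemma 1.1]{Bell2007OnSE}, which gives for a scattered Eberlein compactum a point-finite assignment $\{U_x : x\in K\}$ with $x\in U_x$, and restricts it to $x\in A$ — since the assignment is indexed by points and is finite-to-one, this immediately produces a point-finite family of cardinality $\abs{A}$ meeting $A$; for (ii) it quotes the $\sigma$-point-bounded neighbourhood assignment constructed in the proof of \cite[Theorem 0.1]{Bell2007OnSE} (the theorem that Eberlein compacta of height $\omega+1$ are uniform Eberlein), restricts to $A$, and concludes by summing over the countably many point-bounded pieces. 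You instead work with $T_0$-separating families — Rosenthal's $\sigma$-point-finite separating family of cozero sets for Eberlein compacta, and the $\sigma$-point-bounded separating family characterizing uniform Eberlein compacta, the height hypothesis entering only through the statement of Bell's theorem — and force the subfamily $\mathcal V'$ meeting $A'$ to be large via weight equals cardinality for scattered compacta applied to $\overline{A'}$, together with a reduction to a subset whose cardinality has uncountable cofinality and a separate Fr\'echet--Urysohn argument for countable $A$. What your route buys is that you only need statements (the internal characterizations of Eberlein and uniform Eberlein compacta), not the particular point-indexed assignment from Bell's proof, at the price of the cardinal arithmetic and the case analysis that the paper's point-indexed family renders unnecessary. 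Two small remarks: in the case $\operatorname{cf}(\abs{A})>\omega$ your $\lambda=\abs{A}$ need not be regular (e.g.\ $\aleph_{\omega_1}$), but your pigeonhole only uses uncountable cofinality, which does hold, so the proof stands after rewording; and the clopen refinement is not really needed, since a $T_0$-separating family of cozero sets already yields an embedding of $\overline{A'}$ into a cube of weight $\abs{\mathcal V'}$ by using the functions defining the cozero sets.
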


\begin{proof}
Since $A$ is infinite, it clearly holds $c_{\text{pb}}(A, K) \leq c_{\text{pf}}(A, K) \leq \abs{A}$.

On the other hand, since $K$ is a scattered Eberlein compactum, by \cite[Lemma 1.1]{Bell2007OnSE}, there
exists a point-finite family $\{U_x: x \in K\}$ such that each $U_x$ is a neighbourhood of $x$. Thus the family $\{U_x: x \in A\}$ is point-finite as well, and it is clearly meeting $A$. Hence $c_{\text{pf}}(A, K) \geq \abs{A}$.

For the proof of the second statement, if height of $K$ is $\omega+1$, by the proof of \cite[Theorem 0.1]{Bell2007OnSE}, there exists a $\sigma$-point-bounded family $\{U_x : x \in K\}$ of distinct open sets in $K$, such that each $U_x$ contains $x$. Thus the family $\{ U_x : x \in A\}$ is also $\sigma$-point-bounded in $K$. Thus, it consists of countably many point-bounded families, and each of them is clearly meeting $A$. Thus the cardinality of $\{ U_x : x \in A\}$ is at most $c_{\text{pb}}(A, K)$. Hence $\abs{A} \leq c_{\text{pb}}(A, K)$.
\end{proof}

\section{Proofs of main results}

In this section we prove our main results. To make the proofs shorter and more transparent, we introduce the following notion.

\begin{definition}
Let $K$ be a compact space and $n \in \en$. We say that a system $\mathcal{P}$ consisting of ordered $n$-tuples of distinct functions from $\C(K)$ is a \emph{point-bounded system of bumps of height} $n$ if  the following assertions hold:
\begin{itemize}
    \item for each $(f_1, \ldots, f_{n})  \in \mathcal{P}$, $1 \geq f_1 \geq \ldots f_{n} \geq 0$, 
    \item for each $(f_1, \ldots, f_{n})  \in \mathcal{P}$, the set $\{ x \in K: f_n(x)=1 \}$ has nonempty interior in $K$,
    \item the system $\{\text{spt } f_1\}_{(f_1, \ldots, f_n) \in \mathcal{P}}$ is point-bounded.
\end{itemize}
The \emph{order} of $\mathcal{P}$ is the order of the family $\{\text{spt } f_1\}_{(f_1, \ldots, f_n) \in \mathcal{P}}$. The system $\mathcal{P}$ is \emph{cellular} if it has order $1$.
A \emph{point-bounded system of bumps} is a point-bounded system of bumps of height $1$.
Further, we say that $\mathcal{P}$ is \emph{meeting} a set $A \subset K$ if for each $(f_1, \ldots, f_{n}) \in \mathcal{P}$ and $i=1, \ldots n$, the set $int(\{ x \in K: f_i(x)=1 \})$ intersects $A$. 
\end{definition}

It is clear that if $m<n$ and $\mathcal{P}$ is a point-bounded system of bumps of height $n$, then $\{(f_1, \ldots ,f_m): (f_1, \ldots ,f_n) \in \mathcal{P} \}$ is a point-bounded system of bumps of height $m$. Moreover, any subset of $\mathcal{P}$ is a point-bounded system of bumps of height $n$. Further, a simple usage of Urysohn´s lemma shows that if $\mathcal{U}$ is a point-bounded family in $K$ which is meeting a set $A \subseteq X$, then there exists a point-bounded system of bumps of cardinality $\abs{\mathcal{U}}$ in $K$, which is meeting $A$.

We start with several auxiliary results. The following lemma is essentially known, not in this exact formulation, though. We present the proof for the convenience of the reader.

\begin{lemma}
\label{point finite}
Let $K_1, K_2$ be compact spaces and $E$ be a Banach space. Assume that $T: \C(K_1) \rightarrow \C(K_2, E)$ is an isomorphic embedding, and let $\mathcal{P}$ be a point-bounded system of bumps in $\C(K_1)$. For a fixed $\ep>0$ such that $\ep<\frac{1}{\norm{T^{-1}}}$ we consider the family of open sets
\[\mathcal{U}_{T, \ep}=\{y \in K_2: \norm{Tf(y)} > \ep\}_{f \in \mathcal{P}}.\]
Then:
\begin{itemize}
    \item[(i)] Assume that $E$ has nontrivial cotype. Then the family $\mathcal{U}_{T, \ep}$ is point-bounded. 
    \item[(ii)] Assume that $E$ does not contain an isomorphic copy of $c_0$. Then the family $\mathcal{U}_{T, \ep}$ is point-finite.
\end{itemize}
\end{lemma}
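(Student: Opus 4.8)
The plan is to argue by contradiction in both cases, using a point that lies in too many members of $\mathcal{U}_{T,\ep}$ to manufacture a family of vectors in $E$ that is forbidden --- by nontrivial cotype in case (i), by the absence of $c_0$ in case (ii). The one structural observation that powers everything is the following. Let $m$ be the order of the point-bounded family $\{\spt f\}_{f\in\mathcal{P}}$. Then for every finite subsystem $\mathcal{F}\subseteq\mathcal{P}$ and every choice of signs $\theta_f\in\{-1,1\}$, $f\in\mathcal{F}$, we have $\norm{\sum_{f\in\mathcal{F}}\theta_f f}_\infty\le m$, because at each point of $K_1$ at most $m$ of the functions involved are nonzero and each takes values in $[0,1]$. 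Hence $\norm{T\bigl(\sum_{f\in\mathcal{F}}\theta_f f\bigr)}\le m\norm{T}$, and therefore for every $y\in K_2$,
\[ \norm{\sum_{f\in\mathcal{F}}\theta_f\,(Tf)(y)} \;=\; \norm{\Bigl(T\!\sum_{f\in\mathcal{F}}\theta_f f\Bigr)(y)} \;\le\; m\norm{T}. \]
I expect that spotting and exploiting this uniform control of \emph{all} signed finite combinations --- it is here that the \emph{order} of $\{\spt f\}_{f\in\mathcal{P}}$, rather than mere point-finiteness, gets used --- is the only genuinely delicate point; the remainder is an application of two classical Banach-space facts.

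For (i), assume $E$ has nontrivial cotype with exponent $q$ and constant $C$ as in the definition recalled above, and suppose $y\in K_2$ lies in $U_{f_1},\dots,U_{f_N}$ for distinct $f_1,\dots,f_N\in\mathcal{P}$, so $\norm{(Tf_i)(y)}>\ep$ for $i=1,\dots,N$. I would apply the cotype inequality to the vectors $e_i:=(Tf_i)(y)$: its left-hand side is at least $\bigl(\sum_{i=1}^N\ep^q\bigr)^{1/q}=\ep N^{1/q}$, while the displayed estimate applied with $\theta_i=r_i(t)$ shows that for every $t\in[0,1]$ the integrand on the right-hand side is $\le(m\norm{T})^2$, so the right-hand side is $\le Cm\norm{T}$. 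Thus $\ep N^{1/q}\le Cm\norm{T}$, i.e.\ $N\le(Cm\norm{T}/\ep)^q$. Since this bound is independent of $y$ (if some point lay in more members than that, one would pick that many of them to derive a contradiction), the family $\mathcal{U}_{T,\ep}$ has order at most $(Cm\norm{T}/\ep)^q$ and so is point-bounded.

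For (ii), assume $E$ contains no isomorphic copy of $c_0$, and suppose towards a contradiction that $\mathcal{U}_{T,\ep}$ is not point-finite: there exist $y\in K_2$ and distinct $f_n\in\mathcal{P}$, $n\in\en$, with $\norm{(Tf_n)(y)}>\ep$ for all $n$. Put $e_n:=(Tf_n)(y)$. The displayed estimate gives $\sup\bigl\{\norm{\sum_{n\in F}\theta_n e_n}:F\subseteq\en\text{ finite},\ \theta_n\in\{-1,1\}\bigr\}\le m\norm{T}$, and since the maximum of a convex function over the cube $[-1,1]^F$ is attained at a vertex, the same supremum taken over $\abs{\theta_n}\le 1$ is finite; that is, $\sum_n e_n$ is a weakly unconditionally Cauchy series. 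As $\norm{e_n}>\ep>0$ for all $n$, this series is not unconditionally convergent, so the classical theorem of Bessaga and Pelczynski yields an isomorphic copy of $c_0$ inside $E$ --- contrary to hypothesis. Hence $\mathcal{U}_{T,\ep}$ is point-finite.

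Finally, two minor points worth checking. The hypothesis $\ep<1/\norm{T^{-1}}$ is not actually needed for either conclusion; it merely guarantees that every member of $\mathcal{U}_{T,\ep}$ is nonempty, since $\norm{Tf}_\infty\ge\norm{f}_\infty/\norm{T^{-1}}=1/\norm{T^{-1}}$ for each $f\in\mathcal{P}$ --- this will matter only later, when one wants $\mathcal{U}_{T,\ep}$ to meet a prescribed set. And the split between (i) and (ii) is precisely the split between the two inputs: nontrivial cotype delivers a \emph{uniform} bound on the order (hence point-boundedness), whereas the absence of $c_0$ delivers only finiteness at each individual point.
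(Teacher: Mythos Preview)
Your proof is correct and follows essentially the same approach as the paper: in both parts you exploit the uniform bound $\norm{\sum_{f\in\mathcal{F}}\theta_f f}\le m$ coming from the order of $\{\spt f\}_{f\in\mathcal{P}}$, then feed it into the cotype inequality for (i) and into the Bessaga--Pe\l czy\'nski characterization of $c_0$ via weakly unconditionally Cauchy series for (ii). The only cosmetic differences are that the paper extracts from cotype a single choice of signs giving $\norm{\sum\alpha_i e_i}\ge \ep C n^{1/q}$ (rather than bounding the Rademacher average directly), and for (ii) it verifies wuC by computing $\sum_i|\langle e^*,Tf_i(y_0)\rangle|\le k\norm{T}\norm{e^*}$ via the adjoint $T^*$ instead of using the bounded-signed-sums characterization; both routes are standard and equivalent.
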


\begin{proof}
First we note that the assumption $\ep<\frac{1}{\norm{T^{-1}}}$ ensures that the sets forming the family $\mathcal{U}_{T, \ep}$ are nonempty.

Assume that $E$ has cotype $q \in [2, \infty)$. By the definition of cotype it follows that there exits a constant $C>0$ such that each $m \in \en$ and vectors $e_1, \ldots, e_m \in E$ satisfying $\norm{e_i} \geq \delta>0$ for each $i=1, \ldots, m$, there exist signs $\alpha_1, \ldots, \alpha_m \in S_{\er}$ such that $\norm{\sum_{i=1}^m \alpha_i e_i} \geq \delta C m^{\frac{1}{q}}$.

Thus, for the proof of (i), assume that there are distinct functions $f_1, \ldots, f_n \in \mathcal{P}$ and a point $y \in K_2$ such that for each $i=1, \ldots, n$, $\norm{Tf_i(y)} > \ep$. Then it is possible to find real numbers $\alpha_1, \ldots, \alpha_n \in S_{\er}$ such that $\norm{\sum_{i=1}^n \alpha_i Tf_i(y)} >\ep C n^{\frac{1}{q}}.$ On the other hand, since $\mathcal{P}$ is a point-bounded system of bumps, it is clear that $\norm{\sum_{i=1}^n \alpha_i f_i} \leq k$, where $k$ is the order of $\mathcal{P}$. Consequently, we have
\[\ep C n^{\frac{1}{q}} < \norm{\sum_{i=1}^n \alpha_i Tf_i(y)} = \norm{T(\sum_{i=1}^n \alpha_i f_i)(y)} \leq k\norm{T}.\]
It follows that $n < (\frac{k\norm{T}}{\ep C})^{q}$, and hence the system $\mathcal{U}_{T, \ep}$ is point-bounded.

For the proof of (ii) we want to show that for each $y \in K_2$, the set $\{f \in \mathcal{P}: \norm{Tf(y)}>\ep \}$ is finite. Thus we assume that there exist $y_0 \in K_2$ and a sequence $\{f_n\}_{n \in \en} \subseteq \mathcal{P}$ satisfying that $\norm{Tf_n(y_0)}> \ep$ for each $n \in \en$. Now, if we show that the series $\sum_{n=1}^{\infty} Tf_n(y_0)$ is weakly unconditionally Cauchy in $E$ (which means that $\sum_{n=1}^{\infty} \abs{\la e^*, Tf_n(y_0) \ra}< \infty$ for each $e^* \in \E$), we obtain a contradiction with the fact that $E$ does not contain an isomorphic copy of $c_0$ (see \cite[Theorem 6.7]{morrison2001functional}).
To show this, we consider the evaluation mapping $\phi\colon K_2 \times \E \to \C(K_2, E)^*$ defined as
	\[ \la \phi(y, \e), g \ra=\la \e, g(y) \ra, \quad g \in \C(K_2, E), y \in K_2, \e \in \E.\] 
It is clear that $\norm{\phi(y, \e)}=\norm{\e}$. Now, we fix $e^* \in E^*$, and let $T^*$ be the adjoint of $T$. Further, for a fixed $n \in \en$, let $\alpha_1, \ldots, \alpha_n \in S_{\er}$ satisfy 
	\[\abs{\la T^*\phi(y_0, e^*), f_i \ra}=\alpha_i \la T^*\phi(y_0, e^*), f_i \ra, \quad i=1, \ldots, n.\]
	Further, let $k$ be the order of $\mathcal{P}$. Then we have 
	\begin{equation}
	\nonumber
	\begin{aligned}
	&\sum_{i=1}^{n} \abs{\la e^*, Tf_i(y_0) \ra}=\sum_{i=1}^{n} \abs{\la \phi(y_0, e^*), Tf_i \ra}=
	\sum_{i=1}^{n} \abs{\la T^*\phi(y_0, e^*), f_i \ra}
	=\\&=
	\sum_{i=1}^{n} \alpha_i \la T^*\phi(y_0, e^*), f_i \ra=\la T^*\phi(y_0, e^*), \sum_{i=1}^{n} \alpha_i f_i \ra 
	\leq \\& \leq
	\norm{T^*\phi(y_0, e^*)} \norm{\sum_{i=1}^n \alpha_i f_i}\leq \norm{T^*} \norm{e^*}{\norm{\sum_{i=1}^n \alpha_i f_i}}  \leq  k\norm{T^*}\norm{e^*}. 
	\end{aligned}
	\end{equation}
	Thus also $\sum_{n=1}^{\infty} \abs{\la e^*, Tf_n(y_0) \ra} \leq k\norm{T^*}\norm{e^*}<\infty$. Hence the series $\sum_{n=1}^{\infty} Tf_n(y_0)$ is weakly unconditionally Cauchy, which finishes the proof.
\end{proof}

\begin{lemma}
\label{system}
Let $K_1, K_2$ be compact spaces, $L_2 \subseteq K_2$, $E$ be a Banach space, and assume that $T: \C(K_1) \rightarrow \C(K_2, E)$ is an isomorphic embedding. Let $\mathcal{P}$ be an infinite point-bounded system of bumps in $K_1$. Assume that at least one of the following conditions hold.
\begin{itemize}
\item[(i)] 
$E$ has nontrivial cotype and $\abs{\mathcal{P}}>c_{\text{pb}}(L_2, K_2)$,
\item[(ii)]
$E$ does not contain an isomorphic copy of $c_0$ and $\abs{\mathcal{P}}>c_{\text{pf}}(L_2, K_2)$. 
\end{itemize}
Then for each $\ep>0$ which is smaller than $\frac{1}{\norm{T^{-1}}}$,
\[ \abs{\{ f \in \mathcal{P}: \norm{Tf|_{L_2}} < \ep\}}=\abs{\mathcal{P}}. \]
\end{lemma}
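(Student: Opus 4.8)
The plan is to derive the statement almost immediately from Lemma~\ref{point finite} together with the definitions of $c_{\text{pb}}$ and $c_{\text{pf}}$. I would fix $\ep>0$ with $\ep<\frac1{\norm{T^{-1}}}$ and choose an auxiliary number $\ep'$ with $0<\ep'<\ep$, so that also $\ep'<\frac1{\norm{T^{-1}}}$. For each $f\in\mathcal{P}$ set $U_f=\{y\in K_2:\norm{Tf(y)}>\ep'\}$; this is an open subset of $K_2$ by continuity of $y\mapsto\norm{Tf(y)}$, and it is nonempty for the same reason recorded at the beginning of the proof of Lemma~\ref{point finite}, namely $\norm{Tf}\ge\norm{f}/\norm{T^{-1}}=1/\norm{T^{-1}}>\ep'$ (recall $\norm{f}=1$ for a bump). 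Thus $\{U_f\}_{f\in\mathcal{P}}$ is exactly the family $\mathcal{U}_{T,\ep'}$ of Lemma~\ref{point finite}, so under hypothesis~(i) it is point-bounded and under hypothesis~(ii) it is point-finite.

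The key reduction is the observation that if $U_f\cap L_2=\emptyset$ then $\norm{Tf(y)}\le\ep'<\ep$ for every $y\in L_2$, i.e.\ $\norm{Tf|_{L_2}}<\ep$. Hence, writing $\mathcal{P}^*=\{f\in\mathcal{P}:U_f\cap L_2\neq\emptyset\}$, we get $\mathcal{P}\setminus\mathcal{P}^*\subseteq\{f\in\mathcal{P}:\norm{Tf|_{L_2}}<\ep\}$, and since $\abs{\mathcal{P}}$ is infinite it suffices to prove $\abs{\mathcal{P}^*}<\abs{\mathcal{P}}$ (this forces $\abs{\mathcal{P}\setminus\mathcal{P}^*}=\abs{\mathcal{P}}$, and the sandwich with $\mathcal{P}$ then yields the claimed equality). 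Now the indexed family $\{U_f\}_{f\in\mathcal{P}^*}$ is a subfamily of $\mathcal{U}_{T,\ep'}$, hence still point-bounded (resp.\ point-finite), and each member is a nonempty open set meeting $L_2$. Consequently the collection $\mathcal{V}$ of \emph{distinct} sets occurring among the $U_f$, $f\in\mathcal{P}^*$, is a point-bounded (resp.\ point-finite) family of nonempty open subsets of $K_2$ which is meeting $L_2$, so $\abs{\mathcal{V}}\le c_{\text{pb}}(L_2,K_2)$ under~(i) and $\abs{\mathcal{V}}\le c_{\text{pf}}(L_2,K_2)$ under~(ii); in either case $\abs{\mathcal{V}}<\abs{\mathcal{P}}$ by the hypothesis of the lemma.

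It then remains to recover $\abs{\mathcal{P}^*}$ from $\abs{\mathcal{V}}$, i.e.\ to bound how many indices $f$ can produce the same set $U_f$. For $V\in\mathcal{V}$ fix a point $y_V\in V$; every $f$ with $U_f=V$ satisfies $y_V\in U_f$, so by point-boundedness (resp.\ point-finiteness) of $\mathcal{U}_{T,\ep'}$ there are only boundedly (resp.\ finitely) many such $f$, so in particular each fibre $\{f\in\mathcal{P}^*:U_f=V\}$ is finite. Thus $\abs{\mathcal{P}^*}$ is a sum of $\abs{\mathcal{V}}$ finite terms, which, if $\mathcal{P}^*$ is infinite, forces $\mathcal{V}$ to be infinite and $\abs{\mathcal{P}^*}=\abs{\mathcal{V}}<\abs{\mathcal{P}}$; and if $\mathcal{P}^*$ is finite, then trivially $\abs{\mathcal{P}^*}<\abs{\mathcal{P}}$. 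Either way $\abs{\mathcal{P}\setminus\mathcal{P}^*}=\abs{\mathcal{P}}$, which gives the lemma. I do not expect a genuine obstacle here: all the geometric content lies in Lemma~\ref{point finite}, and the rest is bookkeeping. The only points requiring a little care are choosing $\ep'$ \emph{strictly} below $\ep$ (so that ``$U_f$ misses $L_2$'' upgrades to the strict estimate $\norm{Tf|_{L_2}}<\ep$ demanded by the statement) and the repetition-of-sets counting in the last paragraph.
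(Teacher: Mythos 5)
Your proof is correct and follows essentially the same route as the paper: replace $\ep$ by a strictly smaller level (the paper uses $\ep/2$ where you use $\ep'$), invoke Lemma~\ref{point finite} to get point-boundedness (resp.\ point-finiteness) of the superlevel-set family, and conclude via the definition of $c_{\text{pb}}(L_2,K_2)$ (resp.\ $c_{\text{pf}}(L_2,K_2)$) that fewer than $\abs{\mathcal{P}}$ of the sets can meet $L_2$, so $\abs{\mathcal{P}}$-many $f$ satisfy $\norm{Tf|_{L_2}}<\ep$. Your extra fibre-counting step, handling distinct functions that produce the same open set, is bookkeeping the paper leaves implicit; it is legitimate because the proof of Lemma~\ref{point finite} actually bounds the number of \emph{distinct functions} $f$ with $\norm{Tf(y)}>\ep$ at a given point $y$, not merely the number of distinct sets.
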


\begin{proof}
For $f \in \mathcal{P}$ we consider the open set $U_{f, T, \frac{\ep}{2}}=\{y \in K_2: \norm{Tf(y)} > \frac{\ep}{2}\}$. By Lemma \ref{point finite}(i), the family $\mathcal{U}_{T, \frac{\ep}{2}}=\{ U_{f, T, \frac{\ep}{2}}:f \in \mathcal{P}\}$ of open sets is point-bounded if $E$ has nontrivial cotype, and it is point-finite if $E$ does not contain $c_0$ isomorphically. Thus in both cases, $\abs{\{f \in \mathcal{P}:U_{f, T, \frac{\ep}{2}} \cap L_2 \neq \emptyset \}} < \abs{\mathcal{P}}$. Hence, the cardinality of the set $\{f \in \mathcal{P}:U_{f, T, \frac{\ep}{2}} \cap L_2 = \emptyset \}$ is equal to cardinality of $\mathcal{P}$. This means that for $\abs{\mathcal{P}}$-many functions $f$ from $\mathcal{P}$, $\norm{Tf|_{L_2}} \leq \frac{\ep}{2}<\ep$.
\end{proof}

The following lemma serves as a tool for obtaining the estimate of distortion of the embedding $T$ in Theorem \ref{core}. For the proof see \cite[Lemma 2.2(b)]{rondos-somaglia}. 
\begin{lemma}
\label{norm}
Let $K_1, K_2$ be compact spaces, $E$ be a Banach space, and let $T:\C(K_1) \rightarrow \C(K_2, E)$ be an isomorphic embedding. Let $n, k \in \en$, $n>k$, and let $\ep>0$ be given. Suppose that there exist functions $g_1, \ldots, g_n \in \C(K_1)$ and $x \in K_1$ such  that $g_1(x)=\ldots =g_n(x)=1$, $0 \leq g_1 \leq \ldots \leq g_n \leq 1$, and such that the family
\[\{ y \in K_2: \norm{Tg_i(y)} \geq \ep \}_{i=1}^n\]
has order $k$. Then there exists a linear combination $f$ of the functions $g_1, \ldots, g_n$ such that $\norm{f}=1$ and
\[\norm{Tf} \geq \frac{2n-k}{k\norm{T^{-1}}}-\frac{2n-2k}{k}\ep.\]
\end{lemma}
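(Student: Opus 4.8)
The plan is to argue by induction on the number of nested levels, using the monotonicity $0\le g_1\le\cdots\le g_n\le 1$ to amplify the estimate one level at a time, each step contributing an extra $\tfrac{2}{k}\bigl(\tfrac1{\norm{T^{-1}}}-\ep\bigr)$ to the attainable value of $\norm{Tf}$. Writing $M=\tfrac1{\norm{T^{-1}}}$, the base case $n=k$ is immediate: since $\norm{g_1}=g_1(x)=1$ we have $\norm{Tg_1}\ge M$, so $f=g_1$ witnesses $\norm{Tf}\ge M=\tfrac{2k-k}{k}M$. After $n-k$ inductive steps the value reaches $\tfrac{2n-k}{k}M-\tfrac{2(n-k)}{k}\ep$, with the error term accumulating exactly as $\tfrac{2(n-k)}{k}\ep$, matching the statement.

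First I would set up the dual side. By compactness of $K_2$ and continuity, for each $i$ the norm $\norm{Tg_i}$ is attained at some $y_i\in K_2$, and by Hahn--Banach there is $e_i^*\in\E$ with $\norm{e_i^*}=1$ and $\la e_i^*,Tg_i(y_i)\ra=\norm{Tg_i(y_i)}=\norm{Tg_i}\ge M$. Passing through the evaluation map $\phi\colon K_2\times\E\to\C(K_2,E)^*$ introduced above (with $\norm{\phi(y,e^*)}=\norm{e^*}$), the functionals $\mu_i=T^*\phi(y_i,e_i^*)$ satisfy $\la\mu_i,g_i\ra\ge M$ while $\abs{\la\mu_i,g_j\ra}\le\norm{Tg_j(y_i)}$, which is $<\ep$ whenever $y_i$ lies outside the super-level set of $g_j$. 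The order-$k$ hypothesis says precisely that each $y_i$ belongs to at most $k$ of the sets $\{y:\norm{Tg_j(y)}\ge\ep\}$, so at each chosen point all but at most $k$ of the $g_j$ are seen by $\mu_i$ with weight $<\ep$; this is the mechanism that produces the factor $\tfrac1k$.

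To keep $\norm{f}=1$ under the amplification I would use the summation-by-parts identity supplied by the nesting: if $f=\sum_i c_ig_i$ and $S_j=\sum_{i\ge j}c_i$, then $f=\sum_j S_j(g_j-g_{j-1})$ with $g_0=0$, and since the increments $g_j-g_{j-1}\ge0$ sum to $g_n\le1$ we obtain $\norm{f}\le\max_j\abs{S_j}$. Thus choosing the combination amounts to choosing a profile of partial sums bounded by $1$; the coefficients are then automatically $\abs{c_i}=\abs{S_i-S_{i+1}}\le2$, which is exactly why a single evaluation point can deliver only the Amir--Cambern value $2M$, and why a nested induction is needed to go past it.

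The heart of the argument is the inductive step, passing from a combination $f_{n-1}$ of $g_1,\dots,g_{n-1}$ with $\norm{f_{n-1}}=1$ and $\norm{Tf_{n-1}}\ge\tfrac{2(n-1)-k}{k}M-\tfrac{2(n-1-k)}{k}\ep$ to one incorporating the extra top level $g_n\ge g_{n-1}$. The idea is a Cambern-type reflection: using a point where $f_{n-1}$ nearly attains its norm together with $g_n$ to ``fold'' the estimate, the order-$k$ bound controls the at most $k$ super-level sets active there, so that the reflection gains $\tfrac{2}{k}M$ at a cost of $\tfrac{2}{k}\ep$, while the partial-sum profile is readjusted so that $\norm{f_n}=1$ is preserved. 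I expect this reflection to be the main obstacle: it is where the assumption $g_1\le\cdots\le g_n$ is used essentially, and it requires careful bookkeeping of the nearly disjoint supports of the $Tg_i$ and of the signs in the telescoping profile $S_j$. Iterating $n-k$ times and absorbing the $\ep$-errors yields $f$ with $\norm{f}=1$ and $\norm{Tf}\ge\tfrac{2n-k}{k}M-\tfrac{2(n-k)}{k}\ep$, as required.
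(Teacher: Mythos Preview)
The paper does not prove this lemma; it simply cites \cite[Lemma 2.2(b)]{rondos-somaglia}. So there is no in-paper argument to compare against directly, and your proposal must be assessed on its own merits.

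Your setup is sound: the summation-by-parts identity $f=\sum_j S_j(g_j-g_{j-1})$ with $\norm{f}\le\max_j|S_j|$ is exactly the right device for controlling the norm of combinations, the dual functionals $\mu_i=T^*\phi(y_i,e_i^*)$ are the correct objects to test against, and the observation that $|f_{n-1}|\le g_{n-1}\le g_n$ is useful. The base case $n=k$ is also fine.

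The genuine gap is in the inductive step. You assert that a single ``Cambern-type reflection'' incorporating the new top level $g_n$ gains $\tfrac{2}{k}(M-\ep)$, but you do not specify the reflection, and the natural candidates do not deliver this gain. Take the simplest case $n=2$, $k=1$: starting from $f_1=g_1$ with $\norm{Tf_1}\ge M$ attained at $z_0\in A_1$, the only normalised combinations available are those with partial sums in $[-1,1]$, e.g.\ $f_2=2g_1-g_2$; evaluating at $z_0$ (where $\norm{Tg_2(z_0)}<\ep$) yields $\norm{Tf_2(z_0)}\ge 2M-\ep$, a gain of $M-\ep$, not $2(M-\ep)$. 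Evaluating instead at a point where $\norm{Tg_2}$ is attained gives at best $M-2\ep$. No single reflection of the type you describe bridges the gap from $M$ to $3M-2\ep$ in one step, so the induction as you have formulated it (one new function $g_n$ per step, gaining $\tfrac{2}{k}(M-\ep)$) does not close. The actual argument in the cited reference requires a more delicate iteration in which the point where the current maximum is attained is itself fed back into the construction; simply ``folding'' once against the outermost new level is not enough, and this is precisely the obstacle you flag but do not resolve.
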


We will also need to use the following result (see \cite[Proposition 2.4(a)]{rondos-somaglia}). We stress that here the sets $L_1, L_2$ are considered as topological spaces with the inherited topology.

\begin{prop}
\label{pom}
Let $K_1, K_2$ be compact spaces and $E$ be a Banach space not containing an isomorphic copy of $c_0$. Suppose that $T:\C(K_1) \rightarrow \C(K_2, E)$ is an isomorphic embedding, let $L_1 \subseteq K_1$, $U$ be an open set containing $L_1$, and let $L_2 \subseteq K_2$ be a compact set. If $\Gamma(ht(L_1))>\Gamma(ht(L_2))$, then for each $\ep>0$ there exist a function $f \in \C(K_1, [0, 1])$ and $x \in L_1$ such that $f=1$ on an open neighbourhood of $x$, $f=0$ on $K_1 \setminus U$ and $\norm{Tf|_{L_2}}<\ep$.
\end{prop}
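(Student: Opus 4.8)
The plan is to argue by transfinite induction, using Lemma \ref{point finite}(ii) as the sole analytic engine and the Cantor--Bendixson derivative as the combinatorial one. First I record two reductions. Since $\Gamma(ht(L_2)) < \Gamma(ht(L_1)) \le \Gamma(\infty) = \infty$, the set $L_2$ cannot have $ht(L_2) = \infty$, so $L_2$ is scattered, $ht(L_2)$ is a successor ordinal (or $L_2 = \emptyset$), and $\beta := \Gamma(ht(L_2))$ is a genuine gamma number. Moreover the hypothesis $\Gamma(ht(L_1)) > \beta$ is equivalent to $L_1^{(\beta)} \neq \emptyset$: if $L_1^{(\beta)} = \emptyset$ then $ht(L_1) \le \beta$ and $\Gamma(ht(L_1)) \le \Gamma(\beta) = \beta$, while conversely $L_1^{(\beta)} \neq \emptyset$ gives $\Gamma(ht(L_1)) \ge ht(L_1) > \beta$. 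It therefore suffices to prove, for each $\ep > 0$, the existence of an \emph{admissible bump} --- meaning $f \in \C(K_1, [0,1])$ with $f = 1$ on a neighbourhood of some $x \in L_1$ and $\spt f \subseteq U$ --- satisfying $\norm{Tf|_{L_2}} < \ep$, under the standing assumption $L_1^{(\beta)} \neq \emptyset$; and this I will prove by induction on the gamma number $\beta$. Throughout I may shrink $\ep$ so that $\ep < 1/\norm{T^{-1}}$, since the statement for smaller $\ep$ implies it for larger ones.

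For the base case $\beta \le 1$: if $L_2 = \emptyset$ any Urysohn bump at a point of $L_1 \neq \emptyset$ supported in $U$ works trivially. If $ht(L_2) = 1$, then $L_2$ is a nonempty finite set, being compact and discrete. Since $L_1^{(1)} \neq \emptyset$, I fix an accumulation point $x_0$ of $L_1$, a sequence of distinct points $x_n \to x_0$ in $L_1$, pairwise disjoint open sets $W_n \ni x_n$ with $\overline{W_n} \subseteq U$, and Urysohn bumps $h_n \in \C(K_1,[0,1])$ with $h_n = 1$ near $x_n$ and $\spt h_n \subseteq W_n$. The family $\{h_n\}$ is a cellular system of bumps, so by Lemma \ref{point finite}(ii) the open sets $O_n = \{y \in K_2 : \norm{Th_n(y)} > \ep/2\}$ form a point-finite family. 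As each of the finitely many points of $L_2$ belongs to only finitely many $O_n$, all but finitely many $n$ satisfy $O_n \cap L_2 = \emptyset$, i.e. $\norm{Th_n|_{L_2}} \le \ep/2 < \ep$; any such $h_n$ is the required bump.

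For the inductive step I take $\beta = \omega^\gamma$ with $\gamma \ge 1$ (so $\beta$ is a limit ordinal) and assume the claim for every smaller gamma number. I fix $x_0 \in L_1^{(\beta)}$; since $\beta$ is a limit, $x_0 \in L_1^{(\alpha+1)}$ for every $\alpha < \beta$, so $x_0$ is an accumulation point of each lower derivative $L_1^{(\alpha)}$, $\alpha < \beta$. The idea is to peel off the finite top Cantor--Bendixson level $L_2^{(ht(L_2)-1)}$ by the disjoint-bumps-plus-point-finiteness mechanism of the base case (exploiting that this level is finite), while the lower part of $L_2$ --- which has strictly smaller height, and whose $\Gamma$-value, after finitely many peelings within the block below $\beta$, drops to a gamma number smaller than $\beta$ --- is killed by the inductive hypothesis applied inside small neighbourhoods of points of $L_1$ of rank $< \beta$ clustering at $x_0$. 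Concretely, for a sequence of thresholds $\ep > \ep_1 > \ep_2 > \cdots$ obtained by successive halving, I would build a family of admissible bumps with pairwise disjoint supports shrinking to $x_0$, each produced by the inductive hypothesis on a compact piece of $L_2$ of lower height; Lemma \ref{point finite}(ii) then forces the images along this family to avoid the finitely many top-level points of $L_2$, and the gamma-gap supplies enough spare derived-set rank at $x_0$ to carry out the required number of peeling steps.

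The main obstacle is the interaction of these two mechanisms across \emph{limit} stages of the peeling. Removing an open neighbourhood of the top level reduces the height of the complement but leaves a full-height collar around that level, so one cannot simply split $L_2$ spatially; instead every peeling step must re-invoke the inductive hypothesis on a genuinely lower piece, and each such invocation costs a halving of the threshold. Controlling the accumulated loss $\ep \to \ep/2 \to \ep/4 \to \cdots$ through the $\beta = \omega^\gamma$-many derived levels that must be traversed is exactly what forces the hypothesis to live at the level of gamma numbers rather than arbitrary ordinals: the spare rank in the interval $[\,ht(L_2),\beta)$ is precisely what absorbs the degradation of $\ep$ at each limit step. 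Making this bookkeeping precise --- matching each halving of $\ep$ to one derived level of $L_1$ at $x_0$, and verifying that the point-finiteness of the bump images persists through the transfinite construction --- is the technical heart of the argument, and is where the assumption $\Gamma(ht(L_1)) > \Gamma(ht(L_2))$ is used in full strength.
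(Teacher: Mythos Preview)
The paper does not prove this proposition; it is quoted from \cite[Proposition 2.4(a)]{rondos-somaglia} and used as a black box in the proof of Theorem~\ref{core}. So there is no in-paper argument to compare against, and I assess your proposal on its own merits.

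Your reductions and the base case $\beta\le 1$ are correct and cleanly written. The inductive step, however, is not a proof: you describe an obstacle and then assert that the gamma-number hypothesis is ``precisely'' what overcomes it, without ever saying how. Worse, the specific strategy you outline --- peeling off the single top level $L_2^{(ht(L_2)-1)}$ and iterating --- cannot work as stated. Lowering $ht(L_2)$ by one does not in general lower $\Gamma(ht(L_2))$, so you may need \emph{infinitely} many such peelings before the outer inductive hypothesis becomes applicable (take $\beta=\omega^2$ and $ht(L_2)=\omega\cdot 7+3$: reaching a height with $\Gamma$-value $\le\omega$ costs $\omega\cdot 6+3$ steps). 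Your own phrase ``finitely many peelings within the block below $\beta$'' is therefore wrong, and the next paragraph's talk of traversing ``$\beta=\omega^\gamma$-many derived levels'' with successive halvings of $\ep$ is exactly this problem surfacing; the closing sentences do not supply any mechanism to resolve it.

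The repair --- and this is essentially what the cited source does --- is to peel in \emph{blocks} of size $\omega^{\delta}$, where $\beta=\omega^{\delta+1}$ (note first that $\gamma$ must be a successor, since $ht(L_2)$ is a successor ordinal and $\omega^\gamma=\sup_{\delta<\gamma}\omega^\delta$ for limit $\gamma$). One then runs a \emph{finite} inner induction on the least $m$ with $ht(L_2)\le\omega^{\delta}m$. At the inner step the outer hypothesis is applied not to $L_2$ minus its top level but to $L_2^{(\omega^{\delta}(m-1))}$, which has height $\le\omega^{\delta}$ and hence $\Gamma$-value strictly below $\beta$; this yields infinitely many disjoint admissible bumps, each small on $L_2^{(\omega^{\delta}(m-1))}$, centred at points of high enough derived rank in $L_1$ near $x_0\in L_1^{(\beta)}$. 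The inner inductive hypothesis (for $m-1$) is then invoked once more, inside the support of one of these bumps, on the compact remainder where the image is still large --- a set of height $\le\omega^\delta(m-1)$. Each pass of the inner induction thus costs only a bounded number of $\ep$-halvings, so the total degradation is finite. Organising the induction this way is the missing idea; once it is in place the remaining details are routine.
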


Now we are ready to prove Theorem \ref{core}. 
Throughout the proof, for a function $g \in \C(K_1)$ and $\ep>0$ we will use $\{ \norm{Tg} \geq \ep \}$ as a shortcut for $\{ y \in K_2: \norm{Tg(y)} \geq \ep \}$.

\begin{proof}[Proof of Theorem \ref{core}.]
We will only prove that the lower bound $\frac{2n+2-k}{k}$ is true, since the boundedness by the number $3$ follows from Theorem \ref{dist3}.

Further, we prove just the item (i), the proof of (ii) is the same except that we would use condition (ii) in Lemma \ref{system} instead of condition (i). Thus we assume that $E$ has nontrivial cotype, $T:\C(K_1) \rightarrow \C(K_2, E)$ is an isomorphic embedding and $c(K_1^{(\omega^{\alpha}n)}, K_1)>c(K_2^{(\omega^{\alpha}k)}, K_2)$ for some ordinal $\alpha$ and $n, k \in \en$ such that $n \geq k$, and we fix a small enough $\ep>0$ so that the set $\{ \norm{Tg} \geq \ep \}$ is nonempty for each $g \in \C(K_1)$ of norm $1$. Our aim is to find a nonempty point-bounded system of bumps of height $n+1$ in $\C(K_1)$, which  satisfies that for each $(g_1, \ldots, g_{n+1})$ in it, the family \[\{y \in K_2: \norm{Tg_i(y)} \geq \ep \}_{i=1}^{n+1}\]
has order $k$. Once we find this system, it follows by Lemma \ref{norm} applied to an arbitrary member $(g_1, \ldots, g_{n+1})$ of this system that there exists a function $f \in \C(K_1)$ of norm $1$ such that 
	\[\norm{Tf} \geq \frac{2n+2-k}{k\norm{T^{-1}}}-\frac{2n+2-2k}{k}\ep.\] 
This, since $\ep>0$ was chosen arbitrarily, proves that $\norm{T}\norm{T^{-1}}\geq \frac{2n+2-k}{k}$. To find the desired system, it is enough to prove the following claim. 

\begin{claim}
	For each $i=1, \ldots, n+1$ there exist a point-bounded system of bumps $\mathcal{P}_i$ in $\C(K_1)$, of height $i$ and cardinality strictly greater than $c(K_2^{(\omega^{\alpha} k)}, K_2)$, satisfying that for each 
	$(g_1, \ldots ,g_{i}) \in \mathcal{P}_i$ and for each $j=1, \ldots, i$ the following assertions hold:
    \begin{itemize}
        \item[(i)] the intersection \[int(\{x \in K_1: g_j(x)=1\}) \cap K_1^{(\omega^{\alpha}(n-j+1))}\]
        is nonempty, and
        \item[(ii)] the set \[M_j^i(g_1, \ldots ,g_i)=K_2^{(\omega^{\alpha} \max\{k-j+1, 0\})} \cap \bigcup_{A\in[\{1,\ldots,i\}]^j} \bigcap_{p \in A} \{ \norm{Tg_p} \geq \ep\}\]
        is empty.
    \end{itemize}
Here $[\{ 1,\ldots,i\}]^j$ stands for the set of all subsets of $\{1,\dots,i\}$ of cardinality $j$.
\end{claim}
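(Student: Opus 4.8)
The plan is to prove the Claim by induction on $i$, building the systems $\mathcal{P}_i$ one height at a time, and at each step using Lemma~\ref{system} to pass from a point-bounded system of bumps in $K_1$ to one whose image under $T$ is small on the relevant derived set of $K_2$.

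\textbf{Base case $i=1$.} Since $c(K_1^{(\omega^{\alpha}n)},K_1)>c(K_2^{(\omega^{\alpha}k)},K_2)$, there is a cellular family $\mathcal{U}$ of open subsets of $K_1$ meeting $K_1^{(\omega^{\alpha}n)}$ with $|\mathcal{U}|>c(K_2^{(\omega^{\alpha}k)},K_2)$; shrinking each member and applying Urysohn's lemma produces a cellular (hence point-bounded of order $1$) system of bumps $\mathcal{Q}$ in $\C(K_1)$, of the same cardinality, such that for each $g\in\mathcal{Q}$ the set $\operatorname{int}(\{g=1\})$ meets $K_1^{(\omega^{\alpha}n)}$; this gives condition (i) with $j=1$. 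Now apply Lemma~\ref{system}(i) with $L_2=K_2^{(\omega^{\alpha}k)}$, a compact set, and $\varepsilon$ as fixed: since $|\mathcal{Q}|>c_{\text{pb}}(K_2^{(\omega^{\alpha}k)},K_2)$, the subsystem $\mathcal{P}_1$ of those $g$ with $\|Tg|_{K_2^{(\omega^{\alpha}k)}}\|<\varepsilon$ still has cardinality $|\mathcal{Q}|$, and for these $g$ the set $M_1^1(g)=K_2^{(\omega^{\alpha}k)}\cap\{\|Tg\|\ge\varepsilon\}$ is empty, giving condition (ii).

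\textbf{Inductive step.} Suppose $\mathcal{P}_{i}$ has been constructed for some $1\le i\le n$. We want to attach a new, smaller ``innermost'' bump $g_{i+1}$ to each $(g_1,\dots,g_i)\in\mathcal{P}_i$. The key geometric fact is that for each such tuple the open set $W=\operatorname{int}(\{g_i=1\})$ meets $K_1^{(\omega^{\alpha}(n-i+1))}$; I will choose a single point $x$ there and then work inside the smaller compact space, using that $\omega^{\alpha}(n-i+1)=\omega^{\alpha}(n-(i+1)+1)+\omega^{\alpha}$, so that locally around points of $K_1^{(\omega^{\alpha}(n-i+1))}$ the set $K_1^{(\omega^{\alpha}(n-i))}$ has relative cellularity inside small neighbourhoods still exceeding $c(K_2^{(\omega^{\alpha}k)},K_2)$ --- this is where one must be careful. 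Concretely, for each fixed tuple $p=(g_1,\dots,g_i)$, inside the open set $W_p$ one finds a cellular system of bumps $\mathcal{Q}_p$ supported in $W_p$, dominated by $g_i$, of cardinality $>c(K_2^{(\omega^{\alpha}k)},K_2)$, with each $\operatorname{int}(\{h=1\})$ meeting $K_1^{(\omega^{\alpha}(n-i))}$; then Lemma~\ref{system}, applied with $L_2=K_2^{(\omega^{\alpha}\max\{k-i,0\})}$, keeps $|\mathcal{Q}_p|$-many $h$ with $\|Th|_{L_2}\|<\varepsilon$. One then has to check that the set $M_{i+1}^{i+1}(g_1,\dots,g_i,h)$ is empty: a point of this set lies in $K_2^{(\omega^{\alpha}\max\{k-i,0\})}$ and in at least $i+1$ of the sets $\{\|Tg_p\|\ge\varepsilon\}$, hence (since there are only $i+1$ functions) in all of them, in particular in $\{\|Th\|\ge\varepsilon\}$ --- contradicting the choice of $h$ --- unless the point already witnesses $M_i^i(g_1,\dots,g_i)\ne\emptyset$ on the larger derived set $K_2^{(\omega^{\alpha}\max\{k-i+1,0\})}$, which is excluded by the inductive hypothesis. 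Finally, the union $\mathcal{P}_{i+1}=\bigcup_{p\in\mathcal{P}_i}\{(g_1,\dots,g_i,h):h\in\mathcal{Q}'_p\}$ is a point-bounded system of bumps: its outermost supports are the $\operatorname{spt}g_1$, which already form a point-bounded family in $K_1$, so the order only grows under control, and $|\mathcal{P}_{i+1}|\ge|\mathcal{P}_i|>c(K_2^{(\omega^{\alpha}k)},K_2)$.

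\textbf{Conclusion and main obstacle.} Taking $i=n+1$ in the Claim yields a nonempty point-bounded system of bumps of height $n+1$; its order $k'$ is some finite number, but one still needs order exactly $k$ in the family $\{\|Tg_i\|\ge\varepsilon\}_{i=1}^{n+1}$ to apply Lemma~\ref{norm}. This is exactly what item (ii) of the Claim delivers: $M_{k+1}^{n+1}$ being empty on $K_2^{(0)}=K_2$ means no point of $K_2$ lies in $k+1$ of the sets $\{\|Tg_i\|\ge\varepsilon\}$, i.e. that family has order $\le k$; and since the innermost bump $g_{n+1}$ satisfies $g_{n+1}(x)=1$ at a point $x$ for which all $g_i(x)=1$ and where one readily arranges $\|Tg_i(x)\|\ge\varepsilon$ for all $i$ (pushing $\varepsilon$ down and using $\|T^{-1}\|$), the order is exactly $k$, and Lemma~\ref{norm} finishes it. I expect the main obstacle to be the bookkeeping in the inductive step: ensuring that after restricting to a neighbourhood of a point of $K_1^{(\omega^{\alpha}(n-i+1))}$ one still has a large enough cellular system of bumps meeting the next derived set $K_1^{(\omega^{\alpha}(n-i))}$, and simultaneously that the emptiness of $M_{j}^{i+1}$ for all $j\le i+1$ is preserved --- this requires carefully tracking how the indices shift when a point lies in a new set $\{\|Th\|\ge\varepsilon\}$, and exploiting the additivity $\omega^{\alpha}(m)+\omega^{\alpha}=\omega^{\alpha}(m+1)$ of the derived-set orders.
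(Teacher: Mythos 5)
Your base case matches the paper (modulo using $c$ where the hypothesis gives only $c_{\text{pb}}$, which is harmless since a point-bounded family suffices), but the inductive step has a genuine gap, and it is exactly at the point you flag as ``where one must be careful.'' You propose to find, inside the small open set $W_p=\operatorname{int}(\{g_i=1\})$, a cellular (or point-bounded) system of bumps of cardinality strictly greater than $c_{\text{pb}}(K_2^{(\omega^{\alpha}k)},K_2)$ meeting $K_1^{(\omega^{\alpha}(n-i))}$, and then to run Lemma~\ref{system} again per tuple. Nothing in the hypotheses gives this local largeness, and it is generally false: take $K_1$ the one-point compactification of $\kappa$ disjoint copies of $[0,\omega^{\omega^{\alpha}n}]$; then $c_{\text{pb}}(K_1^{(\omega^{\alpha}n)},K_1)\geq\kappa$ can be huge, while inside any $W_p$ contained in a single copy the set $K_1^{(\omega^{\alpha}(n-i))}\cap W_p$ is countable, so no family of the required cardinality exists there. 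A second, independent failure is your choice $L_2=K_2^{(\omega^{\alpha}\max\{k-i,0\})}$ in Lemma~\ref{system}: the lemma needs $\abs{\mathcal{Q}_p}>c_{\text{pb}}(L_2,K_2)$, which is not available since this derived set is larger than $K_2^{(\omega^{\alpha}k)}$; worse, for $i\geq k$ one has $L_2=K_2$, and then no norm-one bump $h$ can satisfy $\norm{Th|_{L_2}}<\ep<\frac{1}{\norm{T^{-1}}}$ at all, so the step is vacuous. You also only verify emptiness of $M_{i+1}^{i+1}$ and wave at the intermediate indices $j$, but those are precisely the conditions that require the new bump to be small on the parts of the lower derived sets already covered by $j$-fold intersections of the sets $\{\norm{Tg_p}\geq\ep\}$.

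The paper avoids all of this by not using any cardinality counting inside $W_p$. From the inductive emptiness of $M_j^i$ it deduces that each compact set $N_j^i(g_1,\ldots,g_i)=K_2^{(\omega^{\alpha}\max\{k-j,0\})}\cap\bigcup_{A}\bigcap_{p\in A}\{\norm{Tg_p}\geq\ep\}$ has height at most $\omega^{\alpha}$, hence so does their union $N(g_1,\ldots,g_i)$; meanwhile $V\cap K_1^{(\omega^{\alpha}(n-i))}$ has height strictly greater than $\omega^{\alpha}$ because $V$ meets $K_1^{(\omega^{\alpha}(n-i+1))}$. Proposition~\ref{pom} (a height-based, not cellularity-based, tool) then produces a \emph{single} new bump $g_{i+1}$ per tuple, supported in $V$, equal to $1$ near a point of $V\cap K_1^{(\omega^{\alpha}(n-i))}$, and small on $N(g_1,\ldots,g_i)$; this is what yields emptiness of all $M_j^{i+1}$ after the index shift. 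Lemma~\ref{system} is then applied only once per level, to the whole family $\{g_{i+1}\}$ indexed by $\mathcal{P}_i$ (point-bounded because the supports sit inside the original $\operatorname{spt}g_1$), with $L_2=K_2^{(\omega^{\alpha}k)}$, to retain full cardinality. Your concluding remark about needing order ``exactly $k$'' is also a non-issue: in the paper's terminology ``order $k$'' means at most $k$ points of multiplicity, which is what $M_{k+1}^{n+1}=\emptyset$ delivers, and Lemma~\ref{norm} needs nothing more.
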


We note that the system that we were looking for above will be $\mathcal{P}_{n+1}$. To see that this system satisfies the above assumption, it is enough to notice that this follows from the fact that for each $(g_1, \ldots, g_{n+1}) \in \mathcal{P}_{n+1}$, the set $M_{k+1}^{n+1}(g_1, \ldots, g_{n+1})$ is empty. Thus the proof of the above claim will finish the proof of Theorem \ref{core}.

\emph{Proof of Claim.}
We proceed by finite induction.
First, since $c_{\text{pb}}(K_1^{(\omega^{\alpha}n)}, K_1)>c_{\text{pb}}(K_2^{(\omega^{\alpha}k)}, K_2)$, we may find a point-bounded system of bumps $\mathcal{P}$ in $\C(K_1)$ of cardinality strictly greater than $c(K_2^{(\omega^{\alpha} k)}, K_2)$, which is meeting $K_1^{(\omega^{\alpha}n)}$. Now, an application of Lemma \ref{system}(i) gives a subsystem $\mathcal{P}_1$ of $\mathcal{P}$ of the same cardinality, such that for each $f \in \mathcal{P}_1$, $\norm{Tf|_{K_2^{(\omega^{\alpha }k)}}}<\ep$, which finishes the case $i=1$. 

Now, let us assume that $1 \leq i<n+1$ and we suppose that we have found the system $\mathcal{P}_i$. First, for each $(g_1, \ldots, g_i) \in \mathcal{P}_i$ we find a function $g_{i+1}$ in the following way. We know that for each $j=1, \ldots, i$, the set 
\[M_j^i(g_1, \ldots ,g_i)=K_2^{(\omega^{\alpha} \max\{k-j+1, 0\})} \cap \bigcup_{A\in[\{ 1,\ldots,i\}]^j} \bigcap_{p \in A} \{ \norm{Tg_p} \geq \ep\}\]
is empty. Thus for each $j=1, \ldots, i$, if we denote
\[N_j^i(g_1, \ldots ,g_i)=K_2^{(\omega^{\alpha} \max\{k-j, 0\})} \cap \bigcup_{A\in[\{1,\dots,i\}]^j} \bigcap_{p \in A} \{ \norm{Tg_p} \geq \ep\},\]
then $ht(N_j^i(g_1, \ldots ,g_i)) \leq \omega^{\alpha}$. 
Further, we denote 
\[N(g_1, \ldots ,g_i)=\bigcup_{j=1}^{i} N_j^i.\] 
Then $N(g_1, \ldots ,g_i)$ is compact, and $ht(N(g_1, \ldots ,g_i)) \leq \omega^{\alpha}$ by \cite[Lemma 2.3(a)]{rondos-somaglia}. Thus $\Gamma(ht(N(g_1, \ldots ,g_i))) \leq \omega^{\alpha}$.

Further, we know that we can find an open set $V$ such that $g_i=1$ on $V$, and $V \cap K_1^{(\omega^{\alpha}(n-i+1))}$ is nonempty. Then, it holds $ht(V \cap K_1^{(\omega^{\alpha}(n-i))}) > \omega^{\alpha}$, see \cite[Lemma 2.3(b) and (c)]{rondos-somaglia}. Hence \[\Gamma(ht(V \cap K_1^{(\omega^{\alpha}(n-i))})>\omega^{\alpha}.\] Consequently, an application of Proposition \ref{pom} gives a function $g_{i+1} \in \C(K_1, [0, 1])$ and a point $x \in V \cap K_1^{(\omega^{\alpha}(n-i))}$ such that $g_{i+1}=1$ on an open neighbourhood of $x$, $g_{i+1}=0$ on $K_1 \setminus V$ and $\norm{Tg_{i+1}(y)}<\ep$ for each $y \in N(g_1, \ldots ,g_i)$. Then $g_{i+1}\leq g_i$. This finishes the the procedure of assigning to each $(g_1, \ldots ,g_i) \in \mathcal{P}_i$ the function $g_{i+1}$.

Now, we define the system $\mathcal{P}_{i+1}$ as
\[\mathcal{P}_{i+1}=\{(g_1, \ldots, g_{i+1}):(g_1, \ldots, g_{i}) \in \mathcal{P}_i \text{ and } \norm{T(g_{i+1})|_{K_2^{(\omega^{\alpha }k)}}} <\ep \}.\]
We need to check that the system $\mathcal{P}_{i+1}$ has the desired properties. Firstly, it is clear that  $\{g_{i+1}: (g_1, \ldots, g_i) \in \mathcal{P}_i\}$ is a point-bounded system of bumps in $\C(K_1)$. Hence, an application of Lemma \ref{system}(i) to this system shows that $\abs{\mathcal{P}_{i+1}}=\abs{\mathcal{P}_{i}}>\abs{c(K_2^{(\omega^{\alpha} k)}, K_2)}$. Further, from the construction it is clear that the item (i) is satisfied for $\mathcal{P}_{i+1}$. It remains to check that also the item (ii) is satisfied for $\mathcal{P}_{i+1}$, that is, we want to prove that for each $(g_1, \ldots, g_{i+1}) \in \mathcal{P}_{i+1}$ and for each $j=1, \ldots, i+1$, the set $M_j^{i+1}(g_1, \ldots, g_{i+1})$ is empty.

To this end, we fix $(g_1, \ldots, g_{i+1}) \in \mathcal{P}_{i+1}$, and we denote $N_0^{i}(g_1, \ldots, g_{i})=K_2^{(\omega^{\alpha} k)}$. We further fix $j \in \{0, \ldots, i\}$. Then we know that

\begin{equation}
\nonumber
\begin{aligned}
\emptyset=&
\{\norm{Tg_{i+1}} \geq \ep\} \cap N_j^i(g_0, \ldots ,g_i)
=\\&
\{\norm{Tg_{i+1}} \geq \ep\} \cap K_2^{(\omega^{\alpha} \max\{k-j, 0\})} \cap \bigcup_{A\in[\{ 1,\dots,i\}]^j} \bigcap_{p \in A} \{ \norm{Tg_p} \geq \ep\}
=\\& K_2^{(\omega^{\alpha} \max\{k-j, 0\})} \cap \bigcup_{A\in[\{1,\dots,i\}]^{j}} \bigcap_{p \in A\cup\{i+1\}} \{ \norm{Tg_p} \geq \ep\}
\end{aligned}
\end{equation}

Shifting the index $j$ by $1$, we obtain that for each $j=1, \ldots, i+1$,

\[\emptyset=K_2^{(\omega^{\alpha} \max\{k-j+1, 0\})} \cap \bigcup_{A\in[\{1,\dots,i\}]^{j-1}} \bigcap_{p \in A \cup \{i+1\}} \{ \norm{Tg_p} \geq \ep\}.\]

Further, it is simple to check that the set $M_j^{i+1}(g_1, \ldots, g_{i+1})$ can be written as a union of the above set and the set $M_{j}^{i}(g_1, \ldots, g_{i})$ for $j=1, \ldots, i+1$(if we use the convention that the set $M_{i+1}^{i}(g_1, \ldots, g_{i})$ is empty).
Thus we conclude that the set $M_j^{i+1}(g_1, \ldots, g_{i+1})$ is empty by the inductive assumption and the previous equality, which finishes the proof.
\end{proof}

The proof of Theorem \ref{main} now follows quite easily.

\begin{proof}[Proof of Theorem \ref{main}.]
We prove (i), the proof of (ii) is completely analogous. Thus we suppose that $E$ has nontrivial cotype, and let $T:\C(K_1) \rightarrow \C(K_2, E)$ is an isomorphic embedding. Further, we assume that $\alpha$ is an ordinal such that
\[\min_{\beta<\omega^{\alpha}} c_{\text{pb}}(K_1^{(\beta)}, K_1) > \min_{\beta<\omega^{\alpha}} c_{\text{pb}}(K_2^{(\beta)}, K_2)\]
and we seek a contradiction. We begin by noticing that if $\alpha$ is nonzero ordinal, then $\omega^{\alpha}$ is a limit ordinal, and it follows easily by compactness that the above minima are either $0$ or infinite.
Now, we distinguish $3$ cases. 

First we assume that $\alpha=0$. Our assumption then reads as $c(K_1)>c(K_2)$. Note that the cardinals $c(K_1), c(K_2)$ are not finite since we assume that $K_1, K_2$ are infinite. We find a cellular system of bumps $\mathcal{P}$ in $\C(K_1)$ of cardinality greater than $c(K_2)$. It then follows by Lemma \ref{system}(i) that for each $\ep>0$ there exists a function $f \in \mathcal{P}$ such that $\norm{Tf}<\ep$. But this contradicts the fact that $T$ is an isomorphic embedding.

Next we assume that $\alpha$ is a successor ordinal. Then, we find a positive integer $k$ satisfying that $c_{\text{pb}}(K_2^{(\omega^{\alpha-1}k)}, K_2)<\min_{\beta<\omega^{\alpha}} c_{\text{pb}}(K_1^{(\beta)}, K_1)$. Then for each $n \in \en$, $c_{\text{pb}}(K_1^{(\omega^{\alpha-1}n)}, K_1)>c_{\text{pb}}(K_2^{(\omega^{\alpha-1}k)}, K_2)$. Therefore, it follows from Theorem \ref{core}(i) that $\norm{T}\norm{T^{-1}} \geq \frac{2n+2-k}{k}$, which, since $n$ is arbitrary large, again contradicts the fact that $T$ is an isomorphic embedding. 

Finally, if $\alpha$ is a limit ordinal, we proceed similarly as in the successor case. Assuming that 
\[\min_{\beta<\omega^{\alpha}} c_{\text{pb}}(K_1^{(\beta)}, K_1) > \min_{\beta<\omega^{\alpha}} c_{\text{pb}}(K_2^{(\beta)}, K_2)\]
we may find an ordinal $\gamma<\alpha$ such that $c_{\text{pb}}(K_2^{(\omega^{\gamma})}, K_2)<\min_{\beta<\omega^{\alpha}} c_{\text{pb}}(K_1^{(\beta)}, K_1)$. Then for each $n \in \en$, $c_{\text{pb}}(K_1^{(\omega^{\gamma}n)}, K_2)>c_{\text{pb}}(K_2^{(\omega^{\gamma})}, K_2)$. Hence
$\norm{T}\norm{T^{-1}} \geq 2n+1$ for each $n \in \en$ by Proposition \ref{core}(i), which again leads to a contradiction. The proof is finished.
\end{proof}

\section{Preservation of spread}

 In this section we prove that isomorphic embeddings of $\C(K, E)$ spaces, with $E$ having nontrivial cotype, preserve the spread of $K$. First we note that since each subset $L$ of $K$ which has finite height is a union of finitely many subsets of $K$ of the form $L \setminus L^{(1)}, \ldots , L^{(ht(L)-2)} \setminus L^{(ht(L)-1)}, L^{(ht(L)-1)}$, where each of these sets with the inherited topology is discrete, it is clear that discrete sets can be replaced by sets of finite height in the definition of spread.  
 
 Further, if $K$ is compact and $x \in K$, let $\chi_{\{x\}}$ stands for the characteristic function of the point $x$. We recall that $\chi_{\{x\}}$ may be naturally viewed as an element of $\C(K)^{**}$ via the formula $\chi_{\{x\}}(\mu)=\mu(\{x\}), \mu \in \C(K)^*$. We start with a lemma.

\begin{lemma}
\label{discrete}
Let $K_1, K_2$ be compact Hausdorff spaces, $E$ be a Banach space, and let $T: \C(K_1) \rightarrow \C(K_2, E)$ be an isomorphic embedding. Then there exists an $\ep>0$, depending only on $T$, such that for each $x \in K_1$, the set 
\[Y_x=\{y \in K_2, \exists e^* \in S_{E^*}: \abs{\la T^{**}(\chi_{\{x\}}), \ep_y \otimes e^* \ra}>\ep \}\] 
is nonempty. Here $T^{**}:\C(K_1)^{**} \rightarrow \C(K_2, E)^{**}$ stands for the second adjoint of $T$, and $\ep_y \otimes e^*$ is an element of norm $1$ in $\C(K_2, E)^{*}$ defined for $f \in \C(K_2, E)$ by $\la \ep_y \otimes e^*, f \ra=\la e^*, f(y) \ra$.

Further, assume that $E$ has nontrivial cotype. Then:
\begin{itemize}
    \item[(i)] For each $y \in K_2$ there are at most finitely many $x \in K_1$ such that $y \in Y_x$.
    \item[(ii)] If $D$ is a discrete subset of $K_1$ and for each $x \in D$ we fix an arbitrary $y_x \in Y_x$, then the set $Y_D=\{y_x\}_{x \in D}$ has finite height.
\end{itemize}
\end{lemma}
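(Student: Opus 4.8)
The starting point is that $T^{**}$ is again an isomorphic embedding with $\norm{(T^{**})^{-1}}\le\norm{T^{-1}}$ on its range (since $(T^{-1})^{**}T^{**}=\Id$), hence $\norm{T^{**}\chi_{\{x\}}}\ge 1/\norm{T^{-1}}$ because $\norm{\chi_{\{x\}}}_{\C(K_1)^{**}}=1$. For the nonemptiness I would take $\ep$ to be any fixed number in $(0,1/\norm{T^{-1}})$ and argue by contradiction: assume $\norm{\widehat{T^{**}\chi_{\{x\}}}(y)}_{E^{**}}\le\ep$ for every $y$, where $\widehat G(y)\in E^{**}$ denotes $e^*\mapsto\la G,\ep_y\otimes e^*\ra$. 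Using Hahn--Banach (recall $T^*$ is onto, as $T$ is an embedding) choose $\nu_x\in\C(K_2,E)^*$ with $T^*\nu_x=\delta_x$ and $\norm{\nu_x}\le\norm{T^{-1}}$; then $1=\la\delta_x,\chi_{\{x\}}\ra=\la T^{**}\chi_{\{x\}},\nu_x\ra$. Representing $\nu_x$ as an $E^*$-valued regular Borel measure of variation $\le\norm{T^{-1}}$ and ``integrating $T^{**}\chi_{\{x\}}$ against $\nu_x$ through the evaluation functionals'' would yield $1\le\norm{T^{-1}}\,\sup_{y,e^*}\norm{\widehat{T^{**}\chi_{\{x\}}}(y)}_{E^{**}}\le\ep\norm{T^{-1}}<1$, a contradiction. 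I expect the genuinely delicate point to be justifying this last estimate: $\norm{T^{**}\chi_{\{x\}}}$ is not a supremum of evaluations for a generic element of $\C(K_2,E)^{**}$, so one must really use that $\chi_{\{x\}}$ annihilates the continuous measures on $K_1$ (it lives in the ``atomic part'' of $\C(K_1)^{**}$) — this is exactly what the pairing with $\nu_x$ is designed to exploit.

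For part (i), suppose distinct $x_1,\dots,x_m$ have a common point $y\in\bigcap_iY_{x_i}$, so $\norm{\widehat{T^{**}\chi_{\{x_i\}}}(y)}_{E^{**}}>\ep$ for each $i$. The $\chi_{\{x_i\}}$ are disjointly supported in $\C(K_1)^{**}$, hence $\norm{\sum_i\alpha_i\chi_{\{x_i\}}}_{\C(K_1)^{**}}=1$ for arbitrary signs $\alpha_i$; and cotype $q$ passes from $E$ to $E^{**}$ by the principle of local reflexivity, with the same constant $C$ appearing in the proof of Lemma~\ref{point finite}. Choosing the signs as there, $\ep C m^{1/q}\le\norm{\,\widehat{T^{**}(\textstyle\sum_i\alpha_i\chi_{\{x_i\}})}(y)\,}_{E^{**}}\le\norm{T^{**}}=\norm{T}$, so $m\le N_0$ for a constant $N_0=N_0(T,E,\ep)$. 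Thus $\{Y_x\}_{x\in K_1}$ is point-bounded of order $\le N_0$, which is (i).

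For part (ii) the idea is to run a finite recursion down a ``fan'' in $Y_D$ using only finitely many disjointly supported bumps. Two preparatory facts: (a) since $D$ is relatively discrete, every $x\in D$ is isolated in $\overline D$, so by regularity there is an open $U_x\ni x$ with $\overline{U_x}\cap\overline D=\{x\}$; (b) for each $x\in D$ and each neighbourhood $W\subseteq U_x$ of $x$ there is a bump $g$ with $0\le g\le1$, $g\equiv1$ near $x$, $\spt g\subseteq W$ and $\norm{Tg(y_x)}>\ep$ — because $T^{**}\chi_{\{x\}}=\weak^*\text{-}\lim Tg_W$ along the net of such bumps and $y_x\in Y_x$. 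Now suppose $Y_D^{(N_0+1)}\neq\emptyset$ and pick $p_0=y_{x_0}\in Y_D^{(N_0+1)}$ (note $Y_D^{(n)}\subseteq Y_D$). Recursively, for $j=0,\dots,N_0$: the open set $\bigcap_{l\le j}\{y:\norm{Tg_{x_l}(y)}>\ep\}$ is a neighbourhood of $p_j$ and $p_j$ accumulates $Y_D^{(N_0-j)}$, so pick $p_{j+1}=y_{x_{j+1}}$ in their intersection with $p_{j+1}\notin\{p_0,\dots,p_j\}$ (which forces $x_{j+1}\notin\{x_0,\dots,x_j\}$), together with a neighbourhood $W_{j+1}\subseteq U_{x_{j+1}}$ of $x_{j+1}$ whose closure is disjoint from $\overline{W_0},\dots,\overline{W_j}$ — possible because $\overline{W_l}\cap\overline D=\{x_l\}$ forces $x_{j+1}\in D\setminus\{x_0,\dots,x_j\}$ to avoid $\bigcup_{l\le j}\overline{W_l}$ — and the attached bump $g_{x_{j+1}}$. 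After $N_0+1$ steps the point $z:=p_{N_0+1}$ satisfies $\norm{Tg_{x_l}(z)}>\ep$ for $l=0,\dots,N_0$, and $g_{x_0},\dots,g_{x_{N_0}}$ have pairwise disjoint supports, so $\norm{\sum_l\alpha_l g_{x_l}}_\infty\le1$ for any signs; applying the cotype inequality to $Tg_{x_0}(z),\dots,Tg_{x_{N_0}}(z)\in E$ yields $\ep C(N_0+1)^{1/q}\le\norm{T}$, contradicting the choice of $N_0$. Hence $Y_D^{(N_0+1)}=\emptyset$, i.e.\ $Y_D$ is scattered of height at most $N_0+1$.

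The main obstacle, to my mind, is precisely the nonemptiness statement: $\norm{T^{**}\chi_{\{x\}}}$ is in general strictly larger than $\sup_{y,e^*}\abs{\la T^{**}\chi_{\{x\}},\ep_y\otimes e^*\ra}$ for arbitrary bidual elements, so the reduction to evaluation functionals must genuinely use the atomic character of $\chi_{\{x\}}$ via the functional $\nu_x$. Parts (i) and (ii) are then comparatively routine consequences of cotype (transported to $E^{**}$ in (i)) together with the freedom to approximate $\chi_{\{x\}}$ by continuous bumps of arbitrarily small support, so that only finitely many disjointly supported bumps ever enter the argument — and it is exactly the relative discreteness of $D$ that keeps those finitely many supports from interfering with one another.
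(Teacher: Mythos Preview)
Your proposal is essentially correct and close in spirit to the paper's proof, with a few differences worth noting.

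For the nonemptiness of $Y_x$, the paper simply quotes external references (with the constant $\ep<\tfrac{1}{2\norm{T^{-1}}}$ rather than your $\ep<\tfrac{1}{\norm{T^{-1}}}$), so neither you nor the paper supplies a self-contained argument. Your outline via a preimage $\nu_x$ of $\delta_x$ under $T^*$ is the standard route, and the inequality you want does hold, but you are right that the step ``integrate $T^{**}\chi_{\{x\}}$ against $\nu_x$ through evaluation functionals'' needs justification: one has to pass from $\langle T^{**}\chi_{\{x\}},\nu_x\rangle=(T^*\nu_x)(\{x\})$ to an integral $\int_{K_2}(T^*(\ep_y\otimes G(y)))(\{x\})\,d|\nu_x|(y)$ using the Singer representation $\nu_x=G\,d|\nu_x|$ and outer regularity of $|T^*\nu_x|$ at $\{x\}$. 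This is exactly what the cited references do.

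For (i), the paper takes a slightly different path: instead of working in $E^{**}$ and invoking that cotype passes to the bidual, it first approximates each $\chi_{\{x_i\}}$ by a continuous bump $f_i$ with support in a neighbourhood of $x_i$ (disjoint from the other supports) so that $|\langle e_i^*,Tf_i(y)\rangle|>\ep$, and then applies Lemma~\ref{point finite}(i) directly in $E$. Your bidual argument is more direct and avoids the approximation step; the paper's version avoids transporting cotype to $E^{**}$. Both are valid.

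For (ii), your argument and the paper's are essentially identical: both run a finite descending recursion through $Y_D^{(n)}\supseteq Y_D^{(n-1)}\supseteq\cdots$, at each step choosing a new point of $Y_D$ in the open set $\bigcap\{\norm{Tg_l}>\ep\}$, producing a bump around the corresponding point of $D$ with support disjoint from the previous ones (using that $D$ is discrete), and finally contradicting the cotype bound. The paper indexes from $n$ down to $1$, you from $0$ up to $N_0+1$; otherwise the constructions match. Your parenthetical ``note $Y_D^{(n)}\subseteq Y_D$'' is correct and is used implicitly by the paper as well, since the derivatives are taken in $Y_D$ with the subspace topology.
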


\begin{proof}
It is well-known that the set $Y_x$ is nonempty if $\ep<\frac{1}{2\norm{T^{-1}}}$, see e.g. \cite[Proof of Theorem 1.4]{GalegoVillamizar}, or \cite[Lemma 4.2]{rondos-spurny-lattices} for a more general statement. Moreover, it is a standard fact that functions of the form $f_U$, where $U$ is an open neighbourhood of $x$ and $f_U$ is a function in $\C(K_1, [0, 1])$ which attains the value $1$ at $x$ and is $0$ on the complement of $U$, converge weak$^*$ to $\chi_{\{x\}}$ in $\C(K_1)^{**}$ with the natural ordering $U_1 \prec U_2$ iff $U_2 \subseteq U_1$ (see e.g. \cite[Lemma 4.1]{rondos-spurny-lattices}, where a slightly more general statement is proved). Thus, since $T^{**}$ is weak$^*$-weak$^*$ continuous, the functions $Tf_U$ converge weak$^*$ to $T^{**}(\chi_{\{x\}})$. 

From now on, we assume that the Banach space $E$ has nontrivial cotype. For the proof of (i), let $x_1, \ldots, x_n \in K_1$ and $y \in K_2$ be such that $y \in Y_{x_i}$ for $i=1, \ldots, n$. Thus there exist functionals $e_1^*, \ldots, e_n^* \in S_{E^*}$ such that $\abs{\la T^{**}(\chi_{\{x_i\}}), \ep_y \otimes e_i^* \ra}>\ep$ for $i=1, \ldots, n$. Now we find pairwise disjoint open sets $U_1, \ldots, U_n$ in $K_1$ such that each $U_i$ contains $x_i$, and functions $f_1, \ldots, f_n$ such that for each $i$, $spt f_i \subset U_i$ and $\abs{\la Tf_i, \ep_y \otimes e_i^* \ra}=\abs{\la e_i^*, Tf_i(y) \ra}>\ep$. Thus for each $i$, $\norm{Tf_i(y)} > \ep$. Hence it follows by Lemma \ref{point finite}(i) that there is an upper bound for such $n$, which finishes the proof of (i).

To prove $(ii)$, we assume that $D$ is a discrete subset of $K_1$ and the set $Y_D^{(n)}$ is nonempty for some $n \in \en$. We aim to find a finite cellular system of bumps $\mathcal{P}=\{f_i\}_{i=1}^{n}$ in $\C(K_1)$ and a finite sequence $\{y_i\}_{i=1}^{n}$ in $D$ such that for each $i \in \{1, \ldots, n\}$ and $j \leq i$, $\norm{Tf_i(y_j)}>\ep$. Once we find the system, we in particular obtain that for each $i=1, \ldots, n$, $\norm{Tf_i(y_1)}>\ep$. Thus again by Lemma \ref{point finite}(i) there is an upper bound for such $n$. 

To find the desired system, we proceed by finite induction. To start with, we pick $y_n \in Y_D^{(n)}$, and we find $x_n \in D$ such that $y_n \in Y_{x_n}$. Hence, we know that there exists a functional $e_n^*$ in the unit sphere of $E^*$ such that $\abs{\la T^{**}(\chi_{\{x_n\}}), \ep_{y_n} \otimes e_n^* \ra}>\ep$. Next, we may find a neighbourhood $U_n$ of $x_n$ such that $U_n \cap D=\{x_n\}$ and a function $f_n \in \C(K_1, [0, 1])$ whose support is contained in $U_n$, such that $\abs{\la Tf_n, \ep_{y_n} \otimes e_n^*\ra}>\ep$. Hence $\norm{Tf_n(y_n)}>\ep$.

Now, assume that $1 <i \leq n$, and that we have found distinct points $x_i, \ldots, x_n \in D$, $y_i, \ldots, y_n \in K_2$ such that $y_j \in Y_D^{(j)}$ for $j=i, \ldots n$, functions $f_i, \ldots f_n \in \C(K_1, [0, 1])$ whose supports are contained respectively in pairwise disjoint nonempty open sets $U_i, \ldots, U_n$, and such that $U_j \cap D=\{x_j\}$ and $\norm{Tf_j(y_i)} \geq \ep$ for each $j=i, \ldots, n$. Then, since $y_i \in Y_D^{(i)}$, we may find an $y_{i-1} \in Y_D^{(i-1)}$, which lies in the nonempty open set 
\[ \bigcap_{j=i}^n \{z \in K_2:\norm{Tf_i(z)}>\ep\}.\]
Further, we know that we can find some $x_{i-1} \in D$, distinct from $x_i, \ldots, x_n$, such that $y_{i-1} \in Y_{x_{i-1}}$.
Hence, there exists $e_{i-1}^* \in S_{E^*}$ such that 
\[\abs{\la T^{**}(\chi_{\{x_{i-1}\}}), \ep_{y_{i-1}} \otimes e_{i-1}^* \ra}>\ep.\]
Note that $x_{i-1}$ does not lie in the open set $U_i \cup \ldots \cup U_n$.
Consequently, we can find an open set $U_{i-1}$ that does not intersect $U_i, \ldots, U_n$, and such that $U_{i-1} \cap D=\{x_{i-1}\}$, and a function $f_{i-1} \in \C(K_1, [0, 1])$ whose support is contained in $U_{i-1}$, and such that $\abs{\la Tf_{i-1}, \ep_{y_{i-1}} \otimes e_{i-1}^*\ra}>\ep$. Thus $\norm{Tf_{j}(y_{i-1})}>\ep$ for each $j=i-1, i, \ldots, n$. This finishes the induction step and the proof.
\end{proof}

The proof of Theorem \ref{spread} now follows easily.

\begin{proof}[Proof of Theorem \ref{spread}.]
By the previous lemma, if $T:\C(K_1) \rightarrow \C(K_2, E)$ is an isomorphic embedding and $E$ has nontrivial cotype, we choose a small enough $\ep>0$. Then, if $D$ is and infinite discrete subset of $K_1$, for each $x \in D$ we fix an arbitrary point $y_x$ from the nonempty set $Y_x$, and we obtain set of cardinality $\abs{D}$ and finite height in $K_2$. Thus by the remark before the statement of Lemma \ref{discrete} it follows that $s(K_1) \leq s(K_2)$.
\end{proof}

\section{Isomorphisms with distortion less than $3$}

We recall that it has been proved in \cite{Gordon3} for the case or real-valued functions and extended in \cite{GalegoVillamizar} to the case of functions having values in a Banach space that does not contain $c_0$, that if $\C(K_1)$ is isomorphically embedded into $\C(K_2, E)$ by an isomorphism with distortion strictly less than $3$, then for each ordinal $\alpha$, the cardinality of the derivative $K_1^{(\alpha)}$ is less than or equal to the cardinality of $K_2^{(\alpha)}$. The fact that the Banach-Mazur distance between the spaces $\C([0, \omega])$ and $\C([0, \omega 2])$ is exactly $3$ (see \cite{Gordon3}) shows that the number $3$ is optimal for this result. In this section, we prove Theorem \ref{dist3}, which says that in the above case, there are more cardinal invariants of the derived sets that are preserved.

\begin{proof}[Proof of Theorem \ref{dist3}.]
For each $x \in K_1$ we consider the set 
\[\Lambda_x=\{y \in K_2: \text{ for each } f \in \C(K_1, [0, 1]) \text{ which satisfies } f(x)=1, \norm{Tf(y)}>\ep\},\]
where the $\ep>0$ is chosen small enough (depending only on $T$), such that for each ordinal $\alpha$, if $x \in K_1^{(\alpha)}$, the intersection $\Lambda_x \cap K_2^{(\alpha)}$ is nonempty, see \cite[Proof of Theorem 1.7]{GalegoVillamizar} or \cite[Lemma 3.1(b)]{rondos-scattered-subspaces}. Furthermore, for each $y \in K_2$, there are at most finitely many points $x \in K_1$ such that $y \in \Lambda_x$, see again \cite[Proof of Theorem 1.7]{GalegoVillamizar} or \cite[Lemma 3.1 (a)]{rondos-scattered-subspaces}.

Let $\mathcal{U}$ be a point-bounded family in $K_1$ which is meeting $K_1^{(\alpha)}$ for some ordinal $\alpha$. We find a point-bounded system of bumps $\mathcal{P}$ in $K_1$ of cardinality $\mathcal{U}$, which is meeting $K_1^{(\alpha)}$.  Then by Lemma \ref{point finite}, the family
\[\mathcal{U}_{T, \ep}=\{y \in K_2: \norm{Tf(y)} > \ep\}_{f \in \mathcal{P}}\]
is point-bounded if $E$ has nontrivial cotype, and it is point-finite if $E$ does not contain a copy of $c_0$. Moreover, we know from above that the family $\mathcal{U}_{T, \ep}$ is meeting $K_2^{(\alpha)}$, which proves the respective inequalities $c_{\text{pb}}(K_1^{(\alpha)}, K_1) \leq c_{\text{pb}}(K_2^{(\alpha)}, K_2)$ and $c_{\text{pb}}(K_1^{(\alpha)}, K_1) \leq c_{\text{pf}}(K_2^{(\alpha)}, K_2)$.

For the proof of the part concerning the spread, we fix an ordinal $\alpha$, and let $D$ be an infinite discrete subset of $K_1^{(\alpha)}$. For each $x \in D$ we pick an $y_x \in \Lambda_x \cap K_2^{(\alpha)}$, and we claim that the set $\Lambda_D=\{y_x\}_{x \in D}$ has finite height. The proof of this is basically the same as in the proof of Lemma \ref{discrete}, just a bit simpler. 
Thus we assume that the set $\Lambda_D^{(n)}$ is nonempty for some $n \in \en$.  As before, we want to find a finite cellular system of bumps $\mathcal{P}=\{f_i\}_{i=1}^{n}$ in $\C(K_1)$ and a finite sequence $\{y_i\}_{i=1}^{n}$ in $D$ such that for each $i \in \{1, \ldots, n\}$ and $j \leq i$, $\norm{Tf_i(y_j)}>\ep$. First, we pick $y_n \in \Lambda_D^{(n)}$, and we find $x_n \in D$ such that $y_n \in \Lambda_{x_n}$. We also find a neighbourhood $U_n$ of $x_n$ such that $U_n \cap D=\{x_n\}$ and a function $f_n \in \C(K_1, [0, 1])$ whose support is contained in $U_n$, such that $f_n(x_n)=1$. Hence $\norm{Tf_n(y_n)}>\ep$.

Now, assume that $1 <i \leq n$, and that we have found distinct points $x_i, \ldots, x_n \in D$, $y_i, \ldots, y_n \in K_2$ such that $y_j \in \Lambda_D^{(j)}$ for $j=i, \ldots n$, functions $f_i, \ldots f_n \in \C(K_1, [0, 1])$ whose supports are contained respectively in pairwise disjoint nonempty open sets $U_i, \ldots, U_n$, and such that $U_j \cap D=\{x_j\}$ and $\norm{Tf_j(y_i)} > \ep$ for each $j=i, \ldots, n$. Then, since $y_i \in \Lambda_D^{(i)}$, we may find an $y_{i-1} \in \Lambda_D^{(i-1)}$, which lies in the nonempty open set 
\[ \bigcap_{j=i}^n \{z \in K_2:\norm{Tf_i(z)}>\ep\}.\]
Further, we know that we can find some $x_{i-1} \in D$, distinct from $x_i, \ldots, x_n$, such that $y_{i-1} \in \Lambda_{x_{i-1}}$. Since $x_{i-1}$ does not lie in the open set $U_i \cup \ldots \cup U_n$, we can find an open set $U_{i-1}$ that does not intersect $U_i, \ldots, U_n$, $U_{i-1} \cap D=\{x_{i-1}\}$, and a function $f_{i-1} \in \C(K_1, [0, 1])$ whose support is contained in $U_{i-1}$, and such that $f_{i-1}(x_{i-1})=1$. Thus $\norm{Tf_{j}(y_{i-1})}>\ep$ for each $j=i-1, i, \ldots, n$. This finishes the induction step.

Now, since we in particular know that $\norm{Tf_i(y_1)}>\ep$ for each $i=1, \ldots, n$, by Lemma \ref{point finite}(i), there is an upper bound for such $n$, which proves that $\Lambda_D$ has finite height. Moreover, by above, for each $y \in K_2$, there are at most finitely many points $x \in K_1$ such that $y \in \Lambda_x$. Thus the cardinality of $\Lambda_D$ is equal to the cardinality of $D$. Since $\Lambda_D \subset K_2^{(\alpha)}$, the proof is finished. 
\end{proof}

\noindent\textbf{Acknowledgments.}\enspace The author would like express his gratitude to professors Antonio Avilles, Piotr Koszmider, Witold Marciszewski and Gregorz Plebanek for sharing their knowledge about the isomorphic theory of spaces of continuous functions with the author. Many thanks to prof. Ond{\v{r}}ej Kalenda for his useful advice and comments on the content of the paper.


\bibliography{iso-functions}\bibliographystyle{siam}
\end{document}